\newtheorem{theorem}{Theorem}[section]
\newtheorem{lemma}[theorem]{Lemma}
\newtheorem{definition}[theorem]{Definition}
\newtheorem{proposition}[theorem]{Proposition}
\newtheorem{corollary}[theorem]{Corollary}
\newtheorem{conjecture}[theorem]{Conjecture}
 \DeclareMathSymbol{\N}{\mathbin}{AMSb}{"4E}
\DeclareMathSymbol{\Z}{\mathbin}{AMSb}{"5A}
\DeclareMathSymbol{\R}{\mathbin}{AMSb}{"52}
\DeclareMathSymbol{\Q}{\mathbin}{AMSb}{"51}
\DeclareMathSymbol{\I}{\mathbin}{AMSb}{"49}
\DeclareMathSymbol{\C}{\mathbin}{AMSb}{"43}
\def\P{{\mathbb P}}
\def\M{{\mathbb M}}
\numberwithin{equation}{section}
\title{Bott-Chern homology, Bott-Chern differential cohomology and the Hodge conjecture}
\author{Jyh-Haur Teh, Chin-Jui Yang}
\date{}
\begin{document}
\maketitle

\begin{abstract}
We propose a version of the Hodge conjecture in Bott-Chern cohomology and using results from characterizing real holomorphic chains by
real rectifiable currents to provide a proof for this question. We define a Bott-Chern differential cohomology and use atomic section theory
of Harvey and Lawson to construct refined Bott-Chern classes for holomorphic vector bundles in this differential cohomology. These refined Bott-Chern
classes transform naturally to standard Chern classes, Bott-Chern classes and Cheeger-Simons' refined Chern classes.
\end{abstract}

\section{Introduction}
Let $M$ be a compact oriented smooth manifold without boundary of dimension $n$. We denote by $A^k(M)$ and $\mathscr{D}'_k(M)$ the spaces of complex-valued smooth $k$-forms
and complex-valued $k$-currents on $M$ respectively.
By abuse of notation, we also denote by $d$ the operator on currents dual to the exterior derivative $d$ on smooth differential forms on $M$. Then $(\mathscr{D}_*(M), d)$ is a chain complex and $(A^*(M), d)$ is a cochain complex.
The $k$-th homology group
$$H^{DR}_k(M; \C):=H_k(\mathscr{D}'_*(M), d)$$
is called the $k$-th de Rham homology group of $M$ and the $k$-th cohomology group
$$H^k_{DR}(M; \C):=H^k(A^*(M), d)$$
is called the $k$-th de Rham cohomology of $M$.
The natural pairing $\mathscr{D}'_k(M)\times A^k(M) \rightarrow \C$ given by
$$(T, \phi)\mapsto T(\phi)$$
induces a nondegenerate bilinear pairing between de Rham homology and cohomology
$$H_k^{DR}(M; \C)\times H^k_{DR}(M; \C) \rightarrow \C$$
The natural inclusion $A^k(M) \hookrightarrow \mathscr{D}'_{n-k}(M)$ given by
$$(\phi)(\psi):=\int_M\phi\wedge \psi$$
for $\psi\in A^{n-k}(M)$ induces an isomorphism
$$PD:H^k_{DR}(M; \C) \rightarrow H^{DR}_{n-k}(M; \C)$$
which is called the Poincar\'e duality.

Throughout this paper, we let $G$ be a subgroup of $\R$. Let $f:U \rightarrow M$ be a Lipschitz map where $U$ is an open subset of some $\R^N$.
For a simplicial $r$-chain with $G$-coefficients $\sigma=\sum^m_{i=1}g_i\sigma_i$, we mean that $\{\sigma_i\}^m_{i=1}$ is a collection of non-overlapping $r$-simplexes on $U$, i.e., if they meet, they only meet at their boundaries, then we say that $f_*\sigma$ is a Lipschitz $r$-chain with $G$-coefficients.

Let $Lip_k(M; G)\subset \mathscr{D}'_k(M)$ be the subgroup of Lipschitz $k$-chains with $G$-coefficients on $M$.
Then $(Lip_*(M; G), d)$ is a chain complex. We denote its $k$-th homology group by
$$H_k(M; G):=H_k(Lip_*(M; G), d)$$
It is well known that there is a group isomorphism
$$H_k(M; G)\cong H^{\mbox{sing}}_k(M; G)$$
where the latter is the $k$-th singular homology group of $M$ with $G$-coefficients.

For a complex manifold $X$, we may consider smooth $(p, q)$-forms $A^{p, q}(X)$ and their dual
$\mathscr{D}'_{p, q}(X)$, the $(p, q)$-currents on $X$. The $\overline{\partial}$ operator gives
a cochain complex $(A^{p, *}(X), \overline{\partial})$ and its dual chain complex $(\mathscr{D}'_{p, *}(X), \overline{\partial})$.
Let
$$H^{p, q}(X):=H^q(A^{p, *}(X), \overline{\partial})$$ be the $(p, q)$-Dolbeault cohomology of $X$ and
$$H_{p, q}(X):=H_q(\mathscr{D}'_{p, *}(X), \overline{\partial})$$ be the $(p, q)$-Dolbeault homology of $X$.
For $X$ compact, we have a refinement of the Poincar\'e duality
$$PD:H^{p, q}(X) \rightarrow H_{n-p, n-q}(X)$$

Now let $X$ be a compact K\"ahler manifold of dimension $n$. One of the most important result from Hodge theory is the
Hodge decomposition on cohomology groups:
$$H^k(X; \C)\cong \bigoplus_{p+q=k}H^{p, q}(X)$$
For notation simplicity, we identify $H^{p, q}(X)$ with its image in $H^k(X; \C)$.
For $0\leq p\leq n$, by the Poincar\'e duality, we have
$$H_{2(n-p)}(X; \Q) \hookrightarrow H^{DR}_{2(n-p)}(X; \C) \overset{PD^{-1}}{\longrightarrow}H^{2p}(X; \C)$$
Define
$$H^{p, p}(X; \Q):=PD^{-1}(H_{2(n-p)}(X; \Q))\cap H^{p, p}(X)$$
where the intersection is taking place inside the ambient space $H^{2p}(X; \C)$.

For $X$ a complex projective manifold, an algebraic $p$-cycle on $X$ is a formal linear combination $\sum^m_{i=1}a_iV_i$
where each $V_i\subset X$ is an irreducible $p$ dimensional subvariety and $a_i\in \Z$. Since it is well known that compact algebraic varieties
can be smoothly triangulated, an algebraic $p$-cycle determines an integral Lipschitz $2p$-chains. We say that an element
$\alpha\in H^{p, p}(X; \Q)$ is representable by algebraic cycles with rational coefficients if there is an algebraic
$p$-cycle $R$ with rational coefficients such that
$$PD^{-1}([R])=\alpha$$
The following conjecture is one of the center questions in algebraic geometry.

\begin{conjecture}(The Hodge conjecture)
Let $X$ be a complex projective manifold of dimension $n$. Every element of $H^{p, p}(X; \Q)$ is representable by some algebraic
$(n-p)$-cycles with $\Q$-coefficients.
\end{conjecture}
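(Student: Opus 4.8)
The plan is to realize a given Hodge class geometrically as a $d$-closed rectifiable current whose approximate tangent planes are complex almost everywhere, and then convert that current into an algebraic cycle by invoking the characterization of holomorphic chains by rectifiable currents announced in the abstract. Concretely, fix $\alpha \in H^{p,p}(X;\Q)$. By the very definition given above, $\alpha = PD^{-1}([S])$ for some rational homology class $[S]\in H_{2(n-p)}(X;\Q)$, and after clearing denominators we may assume $[S]$ is a positive multiple of an integral class. Using the isomorphism $H_k(X;G)\cong H^{\mathrm{sing}}_k(X;G)$ together with the inclusion $Lip_*(X;G)\subset \mathscr{D}'_*(X)$, I would first represent $[S]$ by a $d$-closed Lipschitz (hence rectifiable) $2(n-p)$-chain $T$ with $\Q$-coefficients, so that $T$ is a concrete current in the homology class dual to $\alpha$.

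The decisive step is to replace $T$, within its homology class, by a rectifiable current $T'$ of pure bidimension $(n-p,n-p)$ whose approximate tangent planes are complex subspaces for $\mathcal{H}^{2(n-p)}$-almost every point. Granting such a $T'$, the characterization of (real) holomorphic chains by (real) rectifiable currents applies directly: a $d$-closed rectifiable $(n-p,n-p)$-current on the complex manifold $X$ is supported on a countable union of $(n-p)$-dimensional analytic subvarieties and equals a locally finite combination of integration currents over those subvarieties with locally constant multiplicities. Since $X$ is projective, Chow's theorem then promotes each analytic component to an algebraic subvariety, so $T'$ is an algebraic $(n-p)$-cycle with $\Q$-coefficients satisfying $PD^{-1}([T'])=\alpha$, which is exactly the desired conclusion.

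The main obstacle, and the heart of the matter, is producing the representative $T'$ of the correct geometric type. The hypothesis $\alpha\in H^{p,p}(X)$ asserts only that the harmonic representative of the Poincar\'e-dual cohomology class is a smooth form of type $(p,p)$; it does not by itself force any rectifiable representative of the dual homology class to have complex tangent planes. Bridging the gap between the cohomological Hodge type (a condition on a smooth closed form modulo $d$-exact forms) and the pointwise geometric type (a condition on the approximate tangent planes of a rectifiable current) is precisely where a naive strategy stalls, since the two notions of ``type'' are genuinely different. My plan is to attack this through the Bott--Chern refinement: rather than working modulo $d$-exact forms one works modulo $\partial\overline{\partial}$-exact forms, where the $dd^c$-lemma on the compact K\"ahler manifold $X$ gives much tighter control over the type decomposition of representatives. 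I would use the $dd^c$-lemma to peel off the non-$(n-p,n-p)$ part of a current representative as an exact error admitting a primitive of controlled type, and then run a Federer--Fleming mass-minimization argument inside the class to extract a minimizer.

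I should flag honestly that this final step is the delicate one and is where I expect the argument to demand the most scrutiny. Mass minimization produces an area-minimizing rectifiable current, but an area-minimizer in a given K\"ahler class is calibrated, hence of complex type, only when the class genuinely meets the calibration; controlling the type of the minimizer, and verifying that the Bott--Chern reorganization actually delivers a minimizer of pure bidimension $(n-p,n-p)$ rather than merely a form-theoretic $(p,p)$ representative, is the crux. The role of the Bott--Chern homology and differential cohomology developed in this paper is to make the type condition the natural variable of the problem, after which the cited characterization theorem performs the purely geometric conversion from current to cycle; establishing that this framework truly yields a $(p,p)$ rectifiable representative is the step on which the whole proof turns.
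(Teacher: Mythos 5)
There is a genuine gap, and it sits exactly where you flagged it — but the honest flag does not repair it. First, a point of orientation: the statement you were asked to prove is the classical Hodge conjecture, which this paper does \emph{not} prove; the paper states it as an open conjecture and proves only a Bott--Chern analogue, whose hypotheses are materially stronger. Your endgame (apply the characterization of real holomorphic chains by rectifiable currents, then Chow's theorem) is sound and is precisely the machinery the paper uses for its Bott--Chern theorem. But your ``decisive step'' — replacing a $d$-closed rational Lipschitz cycle $T$ in the homology class dual to $\alpha$ by a rectifiable current $T'$ of pure bidimension $(n-p,n-p)$ — is not a technical lemma waiting to be checked: it is essentially equivalent to the Hodge conjecture itself, and neither of the two tools you propose can deliver it. Mass minimization fails because the calibration argument only runs in one direction: Wirtinger's inequality shows that positive holomorphic chains are mass-minimizing in their class, but a mass-minimizing rectifiable representative of a class of Hodge type $(p,p)$ need not have complex tangent planes; equality in the calibration bound is forced only if the class already contains a calibrated (i.e.\ holomorphic) representative, which is what you are trying to produce. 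The $dd^c$-lemma fails for a different reason: it is a statement about smooth forms and currents, and it cannot convert a $d$-closed rational Lipschitz cycle into a rational Lipschitz cycle that is also $d^c$-closed within the same class, because the primitives it produces are merely currents, not chains with $\Q$-coefficients.

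The contrast with the paper makes the gap precise. In the paper's Bott--Chern theorem, an element of $H^{p,p}_{BC}(X;\Q)$ comes, \emph{by definition}, with a rational Lipschitz representative $P$ satisfying both $dP=0$ and $d^cP=0$, and differing from a smooth pure-type form by a $dd^c$-exact current; taking the $(n-p,n-p)$-component of the equation $e=P+dd^ca$ is then type-preserving precisely because one works modulo $dd^c$ rather than modulo $d$. From there the paper's Lemma (normal currents with $\mathcal{H}^{2k}$-finite support are real rectifiable) and the Teh--Yang characterization finish the argument. In the classical Hodge conjecture, the dual homology class is only known to admit $d$-closed rational Lipschitz representatives; nothing provides a representative that is simultaneously $d^c$-closed, and this is exactly why the inclusion $H^{p,p}_{BC}(X;\Q)\hookrightarrow H^{p,p}(X;\Q)$ in the paper's introductory diagram is not claimed to be surjective. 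Surjectivity of that inclusion is another name for the step your proposal needs and does not supply.
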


In the following, we will propose a version of the Hodge conjecture on Bott-Chern cohomology. Let $Y$ be a complex manifold
of dimension $n$. The $(p, q)$-Bott-Chern cohomology group of $Y$ which has its origin in the paper \cite{BC} of Bott and Chern is
$$H^{p, q}_{BC}(Y):=\frac{\{\phi\in A^{p, q}(Y): \partial \phi=0, \overline{\partial}\phi=0\}}{Im\partial\overline{\partial}\cap A^{p, q}(Y)}$$
where $Im\partial\overline{\partial}$ is the image of the operator $\partial\overline{\partial}:A^{p+q-2}(Y) \rightarrow A^{p+q}(Y)$.
The $k$-th Bott-Chern cohomology of $Y$ is
$$H^k_{BC}(Y; \C):=\frac{\{\phi\in A^k(Y): \partial \phi=0, \overline{\partial}\phi=0\}}{Im\partial\overline{\partial}\cap A^k(Y)}$$

By definition, we have
$$H^k_{BC}(Y; \C)\cong \bigoplus_{p+q=k}H^{p, q}_{BC}(Y)$$

Dually, we define the Bott-Chern homology of $Y$ to be
$$H_{p, q}^{BC}(Y):=\frac{\{T\in \mathscr{D}'_{p, q}(Y): \partial T=0, \overline{\partial}T=0\}}{Im\partial\overline{\partial}\cap \mathscr{D}'_{p, q}(Y)}, \ \
H_k^{BC}(Y; \C):=\frac{\{T\in \mathscr{D}'_k(Y): \partial T=0, \overline{\partial}T=0\}}{Im\partial\overline{\partial}\cap \mathscr{D}'_k(Y)}$$
where $Im\partial\overline{\partial}$ is the image of the operator $\partial\overline{\partial}:\mathscr{D}'_{k+2}(Y) \rightarrow \mathscr{D}'_k(Y)$.
The $k$-th Bott-Chern homology with $\Q$-coefficients of $Y$ is
$$H^{BC}_k(Y; \Q):=\frac{\{T\in Lip_k(Y; \Q): \partial T=0, \overline{\partial}T=0\}}{Im\partial\overline{\partial}\cap Lip_k(Y; \Q)}$$
For $Y$ a compact complex manifold, it is not difficult to see that the natural pairing
$H^{p, q}_{BC}(Y)\times H^{BC}_{p, q}(Y) \rightarrow \C$ given by
$$([\phi], [T])\mapsto T(\phi)$$
is bilinear and nondegenerate, hence we have Poincar\'e duality isomorphisms
$$PD:H^{p, q}_{BC}(Y) \rightarrow H^{BC}_{n-p, n-q}(Y) \  \mbox{ and } \ PD:H^k_{BC}(Y; \C) \rightarrow H^{BC}_{2n-k}(Y; \C)$$
defined similarly as in the case of de Rham cohomology. Note that
$$H^{BC}_k(Y; \Q) \hookrightarrow H^{BC}_k(Y; \C)$$ We define
$$H^{p, p}_{BC}(Y; \Q):=PD^{-1}(H^{BC}_{2(n-p)}(Y; \Q))\cap H^{p, p}_{BC}(Y)$$
We recall that a closed subset $A\subset Y$ is called a holomorphic subvariety of $Y$ if for any $a\in A$, there is an open neighborhood $U\subset Y$
of $a$ and holomorphic functions $f_1, ..., f_m\in \mathcal{O}(U)$ such that
$$A\cap U=\{z\in U: f_1(z)=\cdots =f_m(z)=0\}$$
A fundamental fact about holomorphic subvarieties is the following Lelong's theorem.

\begin{theorem}(Lelong's theorem)
If $R$ is a holomorphic subvariety of pure dimension $p$ on some complex manifold, then $R$ defines
a $d$-closed current of type $(p, p)$.
\end{theorem}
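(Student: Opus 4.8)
The plan is to reduce the statement to a local question and then separate the three distinct assertions it contains: that $R$ defines a current at all, that this current has type $(p,p)$, and that it is $d$-closed. Since all three are local in nature, I would fix a coordinate chart $U$ biholomorphic to an open subset of $\C^n$ meeting $R$, and decompose $R\cap U=R_{\mathrm{reg}}\sqcup R_{\mathrm{sing}}$, where $R_{\mathrm{sing}}$ is the singular locus. The basic structure theory of holomorphic subvarieties gives that $R_{\mathrm{sing}}$ is itself a holomorphic subvariety of complex dimension at most $p-1$, hence of real dimension at most $2p-2$, while $R_{\mathrm{reg}}$ is a complex submanifold of $U$ of dimension $p$.

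The analytic heart of the argument is the local finiteness of the $2p$-dimensional volume of $R_{\mathrm{reg}}$ near points of $R_{\mathrm{sing}}$. I would establish this via Wirtinger's theorem: on the complex submanifold $R_{\mathrm{reg}}$ the Riemannian volume form coincides with the restriction of $\omega^p/p!$, where $\omega$ is the standard K\"ahler form on $\C^n$. Bounding the volume of $R_{\mathrm{reg}}$ inside a small ball by the sum of the volumes of its orthogonal projections onto the $p$-dimensional complex coordinate subspaces, each of which is a branched cover of bounded sheet number, then yields local finiteness. With finite local mass in hand, the integration functional
$$[R](\phi):=\int_{R_{\mathrm{reg}}}\phi, \qquad \phi\in A^{2p}(U),$$
is a well-defined current. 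Its type is then formal: since $R_{\mathrm{reg}}$ is a complex $p$-dimensional manifold, the pullback to $R_{\mathrm{reg}}$ of any form of bidegree $(a,b)$ with $a+b=2p$ and $(a,b)\neq(p,p)$ vanishes identically, so $[R]$ annihilates every such form and is therefore of type $(p,p)$.

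The last and most delicate step is $d$-closedness: for a test form $\phi$ of degree $2p-1$ we must show $\int_{R_{\mathrm{reg}}}d\phi=0$. I would run a cutoff argument, choosing smooth functions $\chi_\epsi$ supported away from a shrinking tubular neighborhood of $R_{\mathrm{sing}}$ with $\chi_\epsi\to 1$; Stokes' theorem on the boundaryless manifold $R_{\mathrm{reg}}$ gives $\int\chi_\epsi\,d\phi=-\int d\chi_\epsi\wedge\phi$, and the volume estimate above, together with the bound $\dim_{\R}R_{\mathrm{sing}}\leq 2p-2$, forces the right-hand side to zero. The main obstacle is precisely controlling this boundary contribution: one needs the growth of the volume of $R_{\mathrm{reg}}$ in the tube around $R_{\mathrm{sing}}$ to be slow enough that $\int_{R_{\mathrm{reg}}}\abs{d\chi_\epsi}\,\abs{\phi}\to 0$. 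A cleaner alternative that avoids these estimates is to invoke the Federer support theorem: $d[R]$ is supported on $R_{\mathrm{sing}}$, whose Hausdorff dimension $2p-2$ is strictly less than the dimension $2p-1$ of the current $d[R]$, and a current of that dimension supported on so thin a set must vanish. I expect the dimension–support comparison to be the most economical route, with the volume-finiteness lemma remaining the genuine technical core of the whole argument.
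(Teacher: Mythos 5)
The paper does not actually prove this statement: Lelong's theorem is quoted as classical background, so there is no internal argument to compare yours against. Your sketch is the standard classical proof (Lelong's, as presented in the standard references) and its architecture is sound: local finiteness of the $2p$-dimensional volume of $R_{\mathrm{reg}}$ via Wirtinger's theorem and coordinate projections, hence $[R]$ is a well-defined current of locally finite mass; bidegree $(p,p)$ by the formal pullback argument; and $d$-closedness by killing the contribution of $R_{\mathrm{sing}}$. One small point on the first step: the assertion that the projections to the coordinate $p$-planes are branched covers of bounded sheet number is exactly the content of the local parametrization theorem for analytic sets (valid after a generic linear change of coordinates), so that theorem should be cited as the input there.

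The one genuine subtlety is in your preferred route for $d$-closedness. The bare claim ``a current of dimension $2p-1$ supported on a set of Hausdorff dimension $2p-2$ must vanish'' is false for general currents: derivatives of a Dirac mass give nonzero currents of any degree supported on a single point. Federer's support theorem applies to \emph{flat} currents. So the argument must record that $[R]$, having locally finite mass, is a locally flat current; that $d[R]$ is therefore locally flat as well; that $\mathrm{spt}(d[R])\subset R_{\mathrm{sing}}$ (Stokes on the boundaryless manifold $R_{\mathrm{reg}}$, applied away from the singular set); and that $\mathcal{H}^{2p-1}(R_{\mathrm{sing}})=0$ since $R_{\mathrm{sing}}$ is a variety of complex dimension at most $p-1$. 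Only with the flatness hypothesis in place does the support theorem give $d[R]=0$. The cutoff alternative you describe also works, but requires the quantitative estimate that the mass of $[R]$ in an $\epsi$-tube around $R_{\mathrm{sing}}$ is $o(\epsi)$ (morally $O(\epsi^2)$, since $R_{\mathrm{sing}}$ has real codimension two in $R$); that estimate is the technical price the support-theorem route is designed to avoid, and your instinct that the latter is the more economical path is correct.
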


A formal linear combination $\sum^{\infty}_{i=1}a_iV_i$ of irreducible holomorphic subvarieties of dimension $p$ is called a holomorphic $p$-chain with
$G$-coefficients if all $a_i\in G$ and $\bigcup^{\infty}_{i=1}V_i$ is a holomorphic subvarieties of dimension $p$ of $Y$. Again, since compact
holomorphic subvarieties can be smoothly triangulated, they determine a Lipschitz chain.

Now we are able to ask a question analogous to the Hodge conjecture in Bott-Chern cohomology.

\begin{conjecture}(The Hodge conjecture in Bott-Chern cohomology)
Let $Y$ be a compact complex manifold. Every element of $H^{p, p}_{BC}(Y; \Q)$ is representable by holomorphic $p$-chains with $\Q$-coefficients.
\end{conjecture}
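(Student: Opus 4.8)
The plan is to move the statement to Bott-Chern homology by Poincar\'e duality and then recognize the dual cycle as a holomorphic chain. First I would fix $\alpha\in H^{p,p}_{BC}(Y;\Q)$ and put $\beta:=PD(\alpha)$. By the very definition $H^{p,p}_{BC}(Y;\Q)=PD^{-1}(H^{BC}_{2(n-p)}(Y;\Q))\cap H^{p,p}_{BC}(Y)$, the class $\beta$ lies in the image of the rational Bott-Chern homology $H^{BC}_{2(n-p)}(Y;\Q)$ and, being Poincar\'e dual to a class of type $(p,p)$, sits in the summand $H^{BC}_{n-p,n-p}(Y)$. Unwinding the definition of $H^{BC}_{2(n-p)}(Y;\Q)$ then furnishes an explicit representative: a rational Lipschitz cycle $T\in Lip_{2(n-p)}(Y;\Q)$ with $\partial T=0$ and $\overline{\partial}T=0$ whose Bott-Chern class is $\beta$.

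The heart of the argument is to prove that such a $T$ is automatically a holomorphic chain of dimension $n-p$. As a Lipschitz chain it is a rectifiable current with rational multiplicities, hence it has an approximate tangent $2(n-p)$-plane almost everywhere. The decisive observation is that the pair of conditions $\partial T=0$ and $\overline{\partial}T=0$, which is strictly stronger than $d$-closedness, forces these tangent planes to be complex almost everywhere, i.e. forces $T$ to have pure bidimension $(n-p,n-p)$. A model computation already signals this: the totally real plane $\R^2\subset\C^2$ is $d$-closed yet satisfies $\partial\neq 0$, so any off-diagonal component of the tangent obstructs the simultaneous vanishing of $\partial T$ and $\overline{\partial}T$. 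Once pure bidimension is known, $d$-closedness and the two separate conditions coincide, and the characterization of real holomorphic chains by real rectifiable currents applies: a $d$-closed real rectifiable current of pure bidimension $(m,m)$ is a real holomorphic $m$-chain. With $m=n-p$ this exhibits $T=\sum_i c_i[V_i]$ with $c_i\in\Q$ and $V_i$ irreducible holomorphic subvarieties of dimension $n-p$, a holomorphic chain with $\Q$-coefficients; since $[T]=\beta$ we obtain $PD^{-1}([T])=\alpha$, as required.

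The main obstacle is exactly the implication $\partial T=\overline{\partial}T=0\Rightarrow$ complex tangent planes almost everywhere, and it is here that the real-rectifiable-current characterization carries the whole weight. This step also explains why the Bott-Chern analogue is accessible while the classical Hodge conjecture is not: classically a rational cycle is required only to be $d$-closed, and a $d$-closed rectifiable cycle need not have complex tangent planes, whereas the Bott-Chern rational structure builds the stronger $\partial$- and $\overline{\partial}$-closedness into the definition of $H^{BC}_{2(n-p)}(Y;\Q)$, forcing holomorphicity already at the level of cycles. I would therefore isolate the pure-type statement as a separate lemma, establish it for rectifiable currents by localizing and testing $\partial T$ and $\overline{\partial}T$ against forms of the relevant bidegrees as in the model computation, and then invoke the characterization theorem to conclude; throughout one must check that the passage to pure bidimension respects the rational rectifiable structure, so that the holomorphic chain produced indeed retains $\Q$-coefficients.
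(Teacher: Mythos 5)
Your opening move coincides with the paper's: dualize $\alpha$ into Bott--Chern homology and unwind the definition to get a rational Lipschitz cycle $T\in Lip_{2k}(Y;\Q)$ (write $k=n-p$) with $\partial T=\overline{\partial}T=0$ representing $PD(\alpha)$. The gap is your central ``pure-type lemma'': the claim that $\partial T=\overline{\partial}T=0$ forces the approximate tangent planes of $T$ to be complex almost everywhere. These two equations are distributional --- they say that certain integrals over $T$ vanish --- and vanishing of integrals gives no pointwise control over a merely measurable field of tangent planes; a priori, non-complex tangent planes could conspire to cancel in the pairing. Your model computation (a single flat totally real plane fails $\partial$-closedness) rules out one plane, not a measure-theoretic mixture, and ``localizing and testing against forms of the relevant bidegrees'' cannot be executed against measurable tangent data: there is no smooth form dual to $\xi_{p,q}(x)$. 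The difficulty is compounded by the fact that the bidimension components $T_{p,q}$ of a rectifiable current are \emph{not} rectifiable a priori --- $\xi_{p,q}(x)$ is complex, non-simple, and in general not even tangent to the carrier --- so one cannot speak of ``their'' tangent planes at all without first re-establishing rectifiability. That re-establishment is precisely the paper's Lemma (a normal current with $\mathcal{H}^{2k}$-locally finite support is real rectifiable, proved via Federer's structure theorem), an ingredient entirely absent from your proposal; without it, or something equally deep, your decisive observation is an assertion, not a proof.

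The paper's own argument is instructive because it sidesteps your pure-type claim altogether. Since $\alpha$ has a smooth representative $e$ of pure type $(p,p)$, writing $e=P+dd^ca$ and comparing bidimension-$(k,k)$ parts gives $e=P_{k,k}+dd^ca_{k+1,k+1}$; thus only the $(k,k)$-component of the cycle matters, and whether the off-diagonal components $P_{p,q}$, $p\neq q$, vanish is never addressed. The paper then shows $P_{k,k}$ is a holomorphic chain in two steps: it is $d$-closed of finite mass (hence normal) with support inside the $\mathcal{H}^{2k}$-finite set $\mbox{spt}(P)$, hence real rectifiable by the Lemma; then the Teh--Yang characterization (real rectifiable, type $(k,k)$, $d$-closed, \emph{and} $\mathcal{H}^{2k}$-locally finite support --- a hypothesis your quoted version drops, though here it is automatic) makes it a real holomorphic $k$-chain, with rational coefficients inherited from $P$. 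If you insist on your stronger conclusion that $T$ itself is a holomorphic chain, you would have to apply this same machinery to every component $T_{p,q}$ and add a linear-algebra step (a nonzero simple real $2k$-vector always has nonzero $(k,k)$-part) to kill the off-diagonal ones; that is strictly more work than the paper's proof, and your proposal supplies neither the rectifiability lemma nor the characterization's support hypothesis that this route requires.
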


This conjecture is not restricted to complex projective manifolds. A very surprising result is that this conjecture is actually true.
We reminder the reader that a similar version of the Hodge conjecture in de Rham cohomology is false for general compact K\"ahler manifolds.
In the following, we will give a review of a proof of this result. Before that, we give a diagram to illustrate relations between Bott-Chern
and de Rham homology/cohomology for compact K\"ahler manifolds.

\xymatrix{
& H^{p, p}_{BC}(X; \Q) \ar[rd] \ar[ld] \ar@{^{(}->}[rrr]&                             &                           & H^{p, p}(X; \Q) \ar[rd] \ar[ld]& \\
H^{p, p}_{BC}(X) \ar[rd] \ar@{.>}@/^2pc/[rrrrr]^{\cong}&              & \ar[ld] H^{BC}_{2(n-p)}(X; \Q) \ar@{^{(}->}[r]& H_{2(n-p)}(X; \Q) \ar[rd]&            & H^{p, p}(X)\ar[ld]\\
                 &  H^{2p}_{BC}(X; \C)   \ar[rrr]^{\cong}&                             &                         &  H^{2p}(X; \C)  &
}

\section{Real rectifiable currents and characterization of holomorphic chains}
Let $M$ be an oriented Riemannian manifold and $G\subset \R$ be a subgroup.
We say that an $r$-current $T$ on $M$ is $G$-rectifiable if for any $\varepsilon>0$, there is some
open set $U\subset \R^N$ for some $N\in \N$, a Lipschitz map $f:U \rightarrow M$ and some finite
polyhedral $r$-chain $\sigma$ with $G$-coefficients on $U$ such that
$$\mathbb{M}(f_*\sigma-T)<\varepsilon$$
where $\M$ denotes the mass norm of a current.
We say that $T\in \mathscr{D}'_r(M)$ is a locally $G$-rectifiable current if for any $x\in M$,
there is a $G$-rectifiable current $S_x$ in $M$ such that $x\notin \mbox{spt}(T-S_x)$ where $\mbox{spt}$ denotes
the support of currents.

On a complex manifold $X$, we say that a current $T$ of type $(p, p)$ on $X$ is positive if for any $\alpha_1, ..., \alpha_p\in A^{1, 0}(X)$
and nonnegative smooth function $f$ with compact support,
$$T(f(i\alpha_1\wedge \overline{\alpha}_1)\wedge \cdots \wedge (i\alpha_p\wedge \overline{\alpha}_p))\geq 0$$
It is then not too difficult to see that a holomorphic subvariety
$V$ of pure dimension $p$ is integral rectifiable, of type $(p, p)$, $d$-closed and positive.
The converse of this observation is the beautiful characterization theorem of King. A positive holomorphic chain is a holomorphic chain with
positive coefficients.

\begin{theorem}(King's theorem)
A $2p$-current $T$ on a complex manifold $X$ is a positive holomorphic $p$-chain if and only if $T$ is a positive, $d$-closed and
locally integral rectifiable current of type $(p, p)$.
\end{theorem}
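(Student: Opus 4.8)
The plan is to treat the two implications separately. The forward implication is essentially contained in the discussion preceding the statement: by Lelong's theorem each irreducible holomorphic subvariety $V$ of pure dimension $p$ defines a $d$-closed current of type $(p,p)$, and the observation recorded above shows it is also positive and integral rectifiable. A positive holomorphic $p$-chain is a locally finite sum $\sum a_i V_i$ with $a_i>0$, and all four properties are preserved under such sums, so $T$ is automatically positive, $d$-closed, locally integral rectifiable and of type $(p,p)$. The content of the theorem is the converse, and for this I would argue locally: all four hypotheses are local, and being a holomorphic $p$-chain can be checked in a neighborhood of each point together with the classical fact that a locally finite union of local $p$-dimensional subvarieties is again a subvariety. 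Thus it suffices to produce, near each $x\in\mathrm{spt}(T)$, a positive holomorphic $p$-chain agreeing with $T$.

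Fix a coordinate chart and the associated standard K\"ahler form $\omega=\tfrac{i}{2}\sum_j dz_j\wedge d\bar z_j$. Since $T$ is locally integral rectifiable we may write $T=\tau(E,\theta,\xi)$ for a $2p$-rectifiable set $E$, an integer multiplicity $\theta$, and an orienting unit simple $2p$-vector field $\xi$ defined $\mathcal{H}^{2p}$-a.e. on $E$. The first key step is to pin down $\xi$. For a positive current, the mass measure coincides with the trace measure $T\wedge\omega^p/p!$, while Wirtinger's inequality bounds the density $\langle\omega^p/p!,\,\xi\rangle\le 1$ with equality precisely on complex $p$-planes carrying their canonical orientation. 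Combining these (together with the type $(p,p)$ condition) forces equality $\mathcal{H}^{2p}$-a.e., so the approximate tangent plane to $E$ is a complex $p$-dimensional subspace at almost every point of $E$.

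The remaining and hardest step is to upgrade this measure-theoretic statement---a rectifiable set with almost-everywhere complex approximate tangent planes carrying a closed integral current---to genuine complex analyticity of $\mathrm{spt}(T)$. Here the hypothesis $dT=0$ is indispensable. I would exploit it through local projections and density: positivity and the monotonicity formula show that the density $\Theta(T,x)=\lim_{r\to 0}\|T\|(B(x,r))/(c_p r^{2p})$ exists everywhere, is upper semicontinuous, and is a positive integer $\mathcal{H}^{2p}$-a.e. on $E$, so its superlevel sets are the candidate components. Choosing a generic complex-linear projection $\pi$ onto a $p$-dimensional coordinate subspace, one shows $\pi$ restricts to a proper finite branched covering over an open subset of $\C^p$, and that the complex-tangent condition forces the individual sheets to satisfy the Cauchy--Riemann equations, hence to be graphs of holomorphic functions away from a thin branch locus; the integer multiplicities let these sheets assemble into a genuine cycle. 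Concluding that the closure is an honest holomorphic subvariety, rather than merely a branched graph over a Zariski-open set, is exactly where one invokes an analytic-continuation theorem of Bishop--Remmert--Stein type to remove the branch locus and the exceptional measure-zero set.

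Finally I would reassemble the local pieces. Having identified $\mathrm{spt}(T)$ with a locally finite union of irreducible $p$-dimensional subvarieties $V_i$, the locally constant integer multiplicity of $T$ on the regular part of each $V_i$ is strictly positive by positivity, giving $T=\sum a_i [V_i]$ with $a_i\in\Z_{>0}$, that is, a positive holomorphic $p$-chain. The principal obstacle throughout is the analyticity step of the previous paragraph: controlling the geometry of the rectifiable carrier well enough to apply complex-analytic extension results, since a priori integral rectifiability only supplies Lipschitz/real structure, and it is precisely the interplay of positivity, type $(p,p)$, and $d$-closedness that must be leveraged to force holomorphicity.
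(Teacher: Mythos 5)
Your proposal is correct in outline, but it follows a genuinely different route from the paper. What you sketch is essentially King's original argument: Wirtinger's inequality forcing complex approximate tangent planes, generic complex-linear projections realizing the support as a proper finite branched cover, holomorphicity of the individual sheets, and a Bishop--Remmert--Stein type continuation to remove the branch locus. The paper never runs this argument; instead it obtains King's theorem as a corollary of its own generalization (the Teh--Yang theorem for locally \emph{real} rectifiable currents). There the key steps are: positivity plus rectifiability identify the density $\Theta(\|T\|,x)$ with the Lelong number $n(T,x)$; Siu's semicontinuity theorem then says each superlevel set $E_{1/n}=\{x : \Theta(\|T\|,x)\geq 1/n\}$ is a holomorphic subvariety of dimension $\leq p$; the pure $p$-dimensional components are assembled into a single subvariety $N_1$ using Stolzenberg's volume lower bound via Proposition \ref{union of subvarieties}; finally $\mathrm{spt}(T)=N_1$ and $T=\sum_j a_j V_j$ with $a_j$ the generic Lelong numbers. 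King's theorem then follows because for a positive integral rectifiable $(p,p)$-current the densities lie in $\N\cup\{0\}$ and $\mathrm{spt}(T)$ is automatically of $\mathcal{H}^{2p}$-locally finite measure. The trade-off is clear: your route is classical and avoids invoking Siu's big theorem, but it carries exactly the technical load (coordinate choices, properness of projections, holomorphicity of sheets) that the paper explicitly says it wants to avoid, and its reliance on integer multiplicities to count and assemble sheets means it does not extend beyond integral coefficients; the paper's route is conceptually shorter and, crucially, generalizes to real rectifiable currents with an $\mathcal{H}^{2p}$-local finiteness hypothesis on the support, which is precisely the strengthening needed for the Bott--Chern Hodge conjecture application later in the paper.
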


We need a geometrical fact about holomorphic subvarieties. The symbol $\mathcal{H}$ is for the Hausdorff measure and $\Omega(2k)$ is
the volume of the unit solid closed ball in $\R^{2k}$.

\begin{theorem}(Stolenzenberg's theorem)
If $U$ is an open subset of $\C^n$ and $A$ is a holomorphic subvariety of pure dimension $k$ of $U$, then for every $a\in U$ and
all sufficiently small $r$,
$$\mathcal{H}^{2k}(B_r(a)\cap A)\geq \Omega(2k)r^{2k}$$
\end{theorem}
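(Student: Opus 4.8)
The plan is to deduce the density estimate from the monotonicity of the Lelong density, using the fact supplied by Lelong's theorem above that $A$ carries a positive, $d$-closed current of integration $[A]$. We treat the essential case $a\in A$, which is the only situation in which $B_r(a)\cap A$ is nonempty for all small $r$; fix $r_0>0$ with $\overline{B_{r_0}(a)}\subset U$. Writing $\beta=\tfrac{i}{2}\sum_{j=1}^n dz_j\wedge d\overline{z}_j$ for the standard K\"ahler form of $\C^n$, Wirtinger's formula asserts that on the smooth locus of $A$ (a $k$-dimensional complex submanifold) the induced Riemannian volume form agrees with $\beta^k/k!$. Since the singular set $\mathrm{Sing}(A)$ is a holomorphic subvariety of complex dimension $\leq k-1$, it has vanishing $\mathcal{H}^{2k}$-measure, and therefore
$$\mathcal{H}^{2k}(A\cap B_r(a))=\int_{A\cap B_r(a)}\frac{\beta^k}{k!}=[A]\Big(\tfrac{1}{k!}\chi_{B_r(a)}\,\beta^k\Big).$$

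With this translation into the mass of the positive current $[A]$ in hand, I would introduce the normalized density
$$\Theta(r):=\frac{\mathcal{H}^{2k}(A\cap B_r(a))}{\Omega(2k)\,r^{2k}},$$
which is normalized so that a $k$-dimensional complex linear subspace through $a$ has $\Theta\equiv 1$. The heart of the argument is the claim that $\Theta$ is a non-decreasing function of $r$ on $(0,r_0)$. This is the Lelong monotonicity formula: differentiating $r\mapsto r^{-2k}\int_{B_r(a)}[A]\wedge\beta^k$ and invoking Stokes' theorem, the closedness $d[A]=0$ lets one replace powers of $\beta$ by exact comparison forms built from the plurisubharmonic potential $|z-a|^2$, while the positivity of $[A]$ forces the resulting boundary term to be non-negative. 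Since $[A]$ is closed and positive but not smooth, this computation must be carried out after regularization, or equivalently on the smooth part of $A$ using that $\mathrm{Sing}(A)$ is of real codimension at least two; this is the step I expect to be the main technical obstacle.

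Granting monotonicity, the limit $\nu(A,a):=\lim_{r\to 0^+}\Theta(r)$ exists and is the Lelong number of $A$ at $a$. To finish I would identify this limit with the multiplicity $\mathrm{mult}_a(A)$, using that the family of dilations of $A$ about $a$ converges to the tangent cone $C_aA$, which is itself a positive holomorphic $k$-cycle; its density equals $\nu(A,a)$ and, being a nonempty positive sum of $k$-dimensional algebraic cones through the origin, is a positive integer, hence $\nu(A,a)\geq 1$. Because $\Theta$ is non-decreasing, every $r<r_0$ satisfies $\Theta(r)\geq \nu(A,a)\geq 1$, which is precisely the asserted inequality
$$\mathcal{H}^{2k}(B_r(a)\cap A)\geq \Omega(2k)\,r^{2k}.$$
The two places demanding genuine care are the monotonicity formula itself (the regularization across $\mathrm{Sing}(A)$) and the identification of the limiting density with an integer $\geq 1$ via the tangent cone; the remaining reductions are routine.
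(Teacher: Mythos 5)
Your proposal is essentially correct, but there is nothing in the paper to compare it against: the paper states this result as a quoted classical fact (Stolzenberg's theorem), used as background for Proposition \ref{union of subvarieties}, and gives no proof at all. What you have written is the standard proof of the sharp density bound, going back to Lelong: Wirtinger's identity converts $\mathcal{H}^{2k}(A\cap B_r(a))$ into the trace mass $\int_{B_r(a)}[A]\wedge \beta^k/k!$ of the positive closed current $[A]$, Lelong's monotonicity formula makes the normalized ratio $\Theta(r)$ non-decreasing, and the limit $\nu(A,a)$ at a point $a\in A$ is bounded below by $1$, which gives $\mathcal{H}^{2k}(A\cap B_r(a))\geq \Omega(2k)r^{2k}$ since $\Omega(2k)=\pi^k/k!$. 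Your restriction to $a\in A$ is not a loss of generality but a necessary correction: as literally stated (``for every $a\in U$'') the inequality is false for $a\notin A$, since then $B_r(a)\cap A=\emptyset$ for small $r$; the intended hypothesis is $a\in A$.

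Two of the ``technical obstacles'' you flag are lighter than you suggest. First, the monotonicity formula does not require regularizing the current or working on $\mathrm{Reg}\,A$: it is proved for an \emph{arbitrary} closed positive $(k,k)$-current by applying Stokes' theorem with cut-off functions in the test slot, so the singular set of $A$ causes no difficulty once Lelong's theorem (quoted in the paper) guarantees that $[A]$ is closed and positive with locally finite mass. Second, you do not need tangent cones or Thie's identification of $\nu(A,a)$ with $\mathrm{mult}_a(A)$ to conclude $\nu(A,a)\geq 1$: at a \emph{smooth} point the density limit is $1$ by the tangent-plane approximation, so the inequality holds at smooth points by monotonicity; for a singular $a\in A$ pick smooth points $a_j\to a$ (they are dense in $A$) and use $\mathcal{H}^{2k}(A\cap B_r(a))\geq \mathcal{H}^{2k}\bigl(A\cap B_{r-|a_j-a|}(a_j)\bigr)\geq \Omega(2k)\bigl(r-|a_j-a|\bigr)^{2k}$, then let $j\to\infty$. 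This removes the heaviest ingredient from your outline while keeping the argument sharp.
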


From this result, it is not difficult to imagine that if holomorphic subvarieties are not getting too close, their union is again a holomorphic subvariety.

\begin{proposition}\label{union of subvarieties}
Let $U\subset \C^n$ be an open subset. If each $V_j$ is an irreducible holomorphic subvariety of dimension $k$ of $U$ and $\bigcup^{\infty}_{j=1}V_j$
is of $\mathcal{H}^{2k}$-locally finite, then $\bigcup^{\infty}_{j=1}V_j$ is a holomorphic $k$-subvariety of $U$.
\end{proposition}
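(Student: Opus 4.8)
The hypothesis that $\bigcup_j V_j$ has locally finite $\mathcal{H}^{2k}$-measure is exactly the quantitative form of the varieties ``not getting too close''. The plan is to deduce from it that the family $\{V_j\}$ is \emph{locally finite}, meaning every point of $U$ has a neighborhood meeting only finitely many of the $V_j$, and then to invoke the elementary fact that a locally finite union of holomorphic subvarieties is again a holomorphic subvariety. Indeed, near a point $x$ at most finitely many members $V_{j_1},\dots,V_{j_m}$ meet a small neighborhood $W$, so $(\bigcup_j V_j)\cap W=\bigcup_{i=1}^m(V_{j_i}\cap W)$, and a finite union of subvarieties is a subvariety, being the common zero locus of all products formed by choosing one defining holomorphic function from each piece. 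Local finiteness also forces $\bigcup_j V_j$ to be closed, and purity of dimension $k$ is inherited from the individual $V_j$, so the conclusion follows once local finiteness is established.

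The heart of the matter is therefore the local finiteness of $\{V_j\}$, and this is where Stolzenberg's theorem enters. First I would record that distinct irreducible $k$-subvarieties meet in $\mathcal{H}^{2k}$-measure zero: if $i\neq j$ then $V_i\cap V_j$ is a proper subvariety of the irreducible $V_i$, hence of complex dimension $\le k-1$ and real dimension $\le 2k-2$, so $\mathcal{H}^{2k}(V_i\cap V_j)=0$; consequently the measures of pieces lying on different $V_j$ are additive. Now suppose, for contradiction, that infinitely many distinct $V_j$ meet some small ball around a point $x$. Fix concentric balls $B'=B_\rho(x)\subset B=B_{2\rho}(x)$ with $\overline B\subset U$, set $C:=\mathcal{H}^{2k}(A\cap\overline B)<\infty$, and choose $r=\rho/2$ so that $B_r(a)\subset B$ whenever $a\in\overline{B'}$. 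For each offending $V_j$ pick $a_j\in V_j\cap\overline{B'}$; Stolzenberg's theorem gives $\mathcal{H}^{2k}(B_r(a_j)\cap V_j)\ge\Omega(2k)r^{2k}=:\delta>0$. Writing $E_j:=B_r(a_j)\cap V_j\subset A\cap\overline B$ and using $\mathcal{H}^{2k}(E_i\cap E_j)=0$ for $i\neq j$, additivity yields $C\ge\mathcal{H}^{2k}\bigl(\bigcup_j E_j\bigr)=\sum_j\mathcal{H}^{2k}(E_j)\ge\sum_j\delta=\infty$, a contradiction. Hence only finitely many $V_j$ meet any compact subset of $U$.

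The one delicate point, and the main obstacle, is the uniformity of the radius $r$ in the density estimate: Stolzenberg's theorem as stated guarantees the lower bound only for radii ``sufficiently small'', with a threshold that a priori depends on $a_j$ and $V_j$ and could shrink to $0$ as $j\to\infty$, so I cannot simply fix one scale $r$. I would resolve this by appealing to the monotonicity of the density ratio $\rho\mapsto \mathcal{H}^{2k}(B_\rho(a)\cap V)/\bigl(\Omega(2k)\rho^{2k}\bigr)$ for complex analytic varieties, which underlies Stolzenberg's theorem: once this ratio is $\ge 1$ for small $\rho$ it is $\ge 1$ for every $\rho$ with $\overline{B_\rho(a)}\subset U$, so the estimate is valid at the single fixed scale used above. (Alternatively one retains the varying small radii $r_j$ and extracts a uniform positive lower bound on infinitely many of them by a Vitali covering argument, but the monotonicity route is cleaner.) Combining local finiteness with the union-of-subvarieties fact of the first paragraph then finishes the argument. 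I note in passing that one could instead form the positive, $d$-closed, locally integral rectifiable current $T=\sum_j[V_j]$ of type $(k,k)$ — its local finite mass being precisely the additivity of $\mathcal{H}^{2k}$ above — and apply King's theorem directly, identifying $\bigcup_j V_j=\mbox{spt}(T)$ as a holomorphic $k$-chain.
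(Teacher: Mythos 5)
Your proof is correct, and it follows exactly the route the paper signals: the paper itself states this proposition without proof (offering only the remark that Stolzenberg's lower volume bound prevents the $V_j$ from ``getting too close,'' and deferring details to the authors' papers \cite{TY1, TY2}), and your argument—deducing local finiteness of the family $\{V_j\}$ from Stolzenberg's estimate plus the $\mathcal{H}^{2k}$-null pairwise intersections, then taking locally finite unions—is the natural completion of that sketch. Your handling of the one genuine subtlety, the point-dependence of the admissible radius in Stolzenberg's theorem, via Lelong's monotonicity of the density ratio, is the right fix and makes the argument complete.
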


For real rectifiable $k$-currents on $M$, an important fundamental property is that they have integral representation of the form
$$T(\phi)=\int_N<\varphi(x), \overrightarrow{T}(x)>\Theta(||T||, x)d\mathcal{H}^k(x)$$
where $\phi\in A^k(M)$, $\overrightarrow{T}(x)\in \Lambda_k(T_x\R^n)$ is some $k$-vector and $\Theta(||T||, x)$ is the density of the total variation measure $||T||$ at $x$ and
$$N:=\{x\in M:\Theta(||T||, x)>0\}$$

One of our main tools in developing our results is the following Siu's big theorem. We use $n(T, x)$ to denote the Lelong number of a current $T$ at $x$.

\begin{theorem}(Siu's semicontinuity theorem)
Let $T$ be a $d$-closed positive $(p, p)$ current on some complex manifold $X$. The set
$$E_c:=\{x\in X: n(T, x)\geq c\}$$
is a holomorphic subvariety of dimension $\leq p$.
\end{theorem}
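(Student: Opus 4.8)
The statement is local and invariant under biholomorphism, so I would fix a point and work on a small ball $\Omega\subset\C^n$, writing $\beta=\tfrac{i}{2}\sum_j dz_j\wedge d\overline{z}_j$ for the standard Hermitian form and letting $\sigma_T$ be the trace measure of the $d$-closed positive current $T$ of type $(p,p)$, i.e. the positive measure $\sigma_T=T\wedge\tfrac{\beta^p}{p!}$ (positive because $T$ is). With this normalization the Lelong number is
$$n(T,x)=\lim_{r\to 0}\frac{\sigma_T(B_r(x))}{\Omega(2p)\,r^{2p}},$$
the limit existing because positivity and $d$-closedness make $r\mapsto\sigma_T(B_r(x))/r^{2p}$ increasing (the Lelong monotonicity). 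The first, soft half of the argument is to record that $x\mapsto n(T,x)$ is upper semicontinuous: comparing normalized masses on slightly larger concentric balls gives $\limsup_{x\to x_0}n(T,x)\le n(T,x_0)$, so each $E_c$ is automatically closed. I would dispose of the dimension bound at the same stage: on $E_c$ the $2p$-dimensional upper density of the Radon measure $\sigma_T$ is $\ge c$ at every point, so the standard density--measure comparison yields $\mathcal{H}^{2p}(E_c\cap K)\le C\,c^{-1}\sigma_T(K')<\infty$ for compact $K\subset\subset K'$. Once analyticity is established, an analytic set of locally finite $\mathcal{H}^{2p}$-measure has complex dimension $\le p$, which is exactly the asserted bound.

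The genuine content is the analyticity of $E_c$, and the plan is to exhibit $E_c$ as the zero locus of a coherent ideal sheaf. I would first reduce the computation of $n(T,\cdot)$ to Lelong numbers of plurisubharmonic functions. When $T$ has type $(n-1,n-1)$ it has a local potential $T=\tfrac{i}{\pi}\partial\overline{\partial}\varphi$ with $\varphi$ plurisubharmonic and $n(T,\cdot)=\nu(\varphi,\cdot)$; for general $p$ one cannot write $T$ as a single $\partial\overline{\partial}$ of a function, so one instead associates to $T$ an auxiliary plurisubharmonic function $\varphi$ controlling its Lelong numbers by means of a generic linear projection, using Demailly's projection device together with Kiselman's minimum principle (equivalently, via Siu's original slicing argument). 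This reduction is one of the two genuinely hard inputs. For a plurisubharmonic weight the engine is the H\"ormander--Bombieri--Skoda $L^2$ theory: solving $\overline{\partial}$ on the pseudoconvex $\Omega$ with the weight $e^{-2\varphi}$ shows that the multiplier ideal sheaf $\mathcal{I}(\varphi)=\{f:\ |f|^2e^{-2\varphi}\in L^1_{\mathrm{loc}}\}$ is coherent (Nadel's theorem), so its zero variety $V(\mathcal{I}(\varphi))$ is a genuine holomorphic subvariety of $\Omega$.

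It then remains to match such analytic sets to $E_c$ with the correct constant. Skoda's two-sided inequalities only bound the Lelong number in terms of the integrability exponent of $e^{-2\varphi}$, which squeezes a single $V(\mathcal{I}(\lambda\varphi))$ between two upperlevel sets $E_{c''}\subset V(\mathcal{I}(\lambda\varphi))\subset E_{c'}$; to pin down $E_c$ itself I would apply the construction to the rescaled weights $\lambda\varphi$ for a countable family of parameters $\lambda$ increasing to the relevant threshold and use Demailly's regularization to pass to the monotone limit, so that the comparison of Lelong numbers with integrability exponents realizes $E_c$ as a countable intersection of the holomorphic subvarieties $V(\mathcal{I}(\lambda\varphi))$, and hence as a holomorphic subvariety. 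I expect the main obstacle to be precisely this two-part analytic core --- the reduction of an arbitrary positive $(p,p)$-current to a single plurisubharmonic potential, and the coherence of the multiplier ideal sheaf obtained from the $L^2$-estimates --- with the semicontinuity and the dimension bound being comparatively routine.
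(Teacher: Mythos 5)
The paper never proves this statement: Siu's semicontinuity theorem is imported as a black box (``one of our main tools'') and is only \emph{used}, in the sketch of the Teh--Yang characterization of positive real holomorphic chains. So there is no in-paper argument to compare yours against, and your proposal has to be judged as a proof of Siu's theorem itself. What you outline is the modern Kiselman--Demailly route, not Siu's original slicing-and-induction argument. The soft parts are right: Lelong monotonicity, upper semicontinuity of $x\mapsto n(T,x)$, closedness of $E_c$, the bound $\mathcal{H}^{2p}(E_c\cap K)\le C\,c^{-1}\sigma_T(K')$ from the density comparison, and the fact that an analytic set of locally finite $\mathcal{H}^{2p}$-measure has dimension $\le p$. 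You also correctly isolate the two hard inputs: the reduction of a $(p,p)$-current to plurisubharmonic weights (projection device plus Kiselman's minimum principle, since no global potential exists for $p<n-1$), and the coherence of multiplier ideal sheaves via H\"ormander--Bombieri--Skoda $L^2$ theory.

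There is, however, a genuine gap in your final matching step. Skoda's two-sided estimates give $E_{n/\lambda}(\varphi)\subset V(\mathcal{I}(\lambda\varphi))\subset E_{1/\lambda}(\varphi)$, and the sets $V(\mathcal{I}(\lambda\varphi))$ \emph{increase} with $\lambda$ (normalize $\varphi\le 0$ locally). Hence the only members of your family that contain $E_c$ are those with $\lambda\ge n/c$, and their intersection is the single set $V(\mathcal{I}((n/c)\varphi))$, which is trapped only between $E_c$ and $E_{c/n}$; intersecting over $\lambda$ increasing to a threshold, as you propose, cannot converge to $E_c$ --- the factor-$n$ discrepancy does not rescale away, and intersecting over small $\lambda$ instead collapses to the set of points of infinite Lelong number. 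What closes the gap is precisely Kiselman's attenuation of singularities (a new psh weight whose Lelong number is $0$ wherever $\nu(\varphi,\cdot)<c$ and $\ge\delta>0$ wherever $\nu(\varphi,\cdot)\ge c$, so that the two Skoda bounds pinch to the same set), or equivalently Demailly's approximation $\varphi_m=\frac{1}{2m}\log\sum_j|g_{m,j}|^2$ by generators of $\mathcal{I}(m\varphi)$, which satisfies $\nu(\varphi,x)-n/m\le\nu(\varphi_m,x)\le\nu(\varphi,x)$ and yields $E_c(\varphi)=\bigcap_m E_{c-n/m}(\varphi_m)$ with each term analytic because $\varphi_m$ has analytic singularities. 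You name both devices, but you deploy Kiselman's principle only in the reduction from $(p,p)$-currents to psh functions; one of the two must also carry this last identification of $E_c$. With that repair your outline is the standard proof.
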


We give a generalization of King's theorem (see \cite{TY1, TY2}).
\begin{theorem}(Teh-Yang)
Let $X$ be a complex manifold and $T\in \mathscr{D}'_{2k}(X)$. Then $T$ is a positive real holomorphic $k$-chain if and only if $T$ is a locally real rectifiable current of type
$(k, k)$, $d$-closed, positive and $spt(T)$ is of $\mathcal{H}^{2k}$-locally finite.
\end{theorem}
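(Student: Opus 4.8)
The plan is to treat the two implications separately, with the forward direction being essentially a bookkeeping exercise and the reverse direction carrying all the analytic content.

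For the forward implication, suppose $T=\sum_i a_i V_i$ is a positive real holomorphic $k$-chain, so that each $a_i>0$ and $\bigcup_i V_i$ is a holomorphic subvariety of pure dimension $k$. Lelong's theorem gives that each $V_i$ is a $d$-closed current of type $(k,k)$, and since a holomorphic subvariety of pure dimension $k$ is positive, the positive combination $T$ is again positive and of type $(k,k)$. Each $V_i$, being smoothly triangulable, is integral rectifiable, so $a_iV_i$ is real rectifiable; because $\bigcup_i V_i$ is a subvariety, only finitely many $V_i$ meet any compact set, and therefore $T$ is locally real rectifiable. Finally $\operatorname{spt}(T)=\bigcup_i V_i$ is a holomorphic subvariety of dimension $k$ and hence is $\mathcal{H}^{2k}$-locally finite.

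For the reverse implication the problem is local, so I would assume $X=U\subset\C^n$ is open and $T$ is real rectifiable, of type $(k,k)$, $d$-closed, positive, with $\mathcal{H}^{2k}$-locally finite support. First I would invoke the integral representation of rectifiable currents to write the mass measure as $\|T\|=\Theta(\|T\|,\cdot)\,\mathcal{H}^{2k}\llcorner N$ carried on the $2k$-rectifiable set $N=\{\Theta(\|T\|,\cdot)>0\}$, and to record that for a positive $(k,k)$ current the mass density agrees, up to a universal constant, with the Lelong number, so that $n(T,x)>0$ for $\mathcal{H}^{2k}$-a.e. $x\in N$. Next I would apply Siu's semicontinuity theorem to the level sets $E_c=\{n(T,\cdot)\geq c\}$: each is a subvariety of dimension $\leq k$, and the pure $k$-dimensional irreducible components occurring among them form a countable family $\{V_i\}$ on which the Lelong number is generically a constant $a_i>0$. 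Since the components of dimension $<k$ are $\mathcal{H}^{2k}$-null, the carrier $N$ is contained, up to an $\mathcal{H}^{2k}$-null set, in $\bigcup_i V_i$. The heart of the argument is then to show $T=\sum_i a_i V_i$: on a neighborhood meeting only $V_i$, positivity and $d$-closedness force the underlying orientation of $T$ to be the canonical complex orientation and the density to equal its generic value $a_i$ almost everywhere, so that $T$ restricts there to $a_iV_i$; summing over $i$ and discarding the null singular and overlap locus gives the claimed identity, with all coefficients positive. It remains to check that $\sum_i a_iV_i$ is genuinely a holomorphic $k$-chain, i.e. that $\bigcup_i V_i$ is a holomorphic subvariety; this is exactly where the hypothesis on the support enters, since $\bigcup_i V_i\subset\operatorname{spt}(T)$ is $\mathcal{H}^{2k}$-locally finite and so Proposition \ref{union of subvarieties} yields that $\bigcup_i V_i$ is a holomorphic subvariety of dimension $k$.

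The main obstacle is the density--Lelong identification together with the rigidity step that upgrades ``$T$ is carried on $V_i$ with generic density $a_i$'' to the exact equality $T\llcorner V_i=a_iV_i$. In King's theorem this is forced by integrality, which via Stolzenberg's theorem also yields the local finiteness of the support for free; here the coefficients are only real, so the density can be arbitrarily small, the support need not be controlled a priori, and both the exact-restriction step and the passage from the family $\{V_i\}$ to an honest subvariety must be supplied by hand---the former through the structure theory of positive rectifiable $(k,k)$ currents and the latter through the imposed $\mathcal{H}^{2k}$-local finiteness and Proposition \ref{union of subvarieties}.
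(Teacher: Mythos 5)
Your proposal is correct and follows essentially the same route as the paper's proof: the integral representation of a real rectifiable current, the identification of the mass density with the Lelong number for positive $(k,k)$ currents, Siu's semicontinuity theorem to produce the pure $k$-dimensional subvarieties $V_i$ (with the lower-dimensional pieces discarded as $\mathcal{H}^{2k}$-null), and the proposition on unions of subvarieties fed by the $\mathcal{H}^{2k}$-local finiteness of the support. The only difference is one of ordering: the paper first applies that proposition to conclude $N_1=\bigcup_i V_i$ is a closed subvariety equal to the support of $T$, and only then reads off $T=\sum_i a_iV_i$ with $a_i$ the generic Lelong numbers, whereas you perform the local rigidity step first --- strictly speaking the existence of neighborhoods meeting only one $V_i$ already uses that the union is a subvariety, so the paper's order is the safer way to arrange the same ingredients.
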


We only need to prove the converse. We give a sketch of a proof:
\begin{enumerate}
\item From the integral representation
$$T(\phi)=\int_N<\varphi(x), \overrightarrow{T}(x)>\Theta(||T||, x)d\mathcal{H}^k(x)$$
we see that $N\subset \mbox{spt}(T)$ and hence is of $\mathcal{H}^{2k}$-locally finite.

\item Since $T$ is positive and real rectifiable, the Lelong number
$$n(T, x)=\Theta(||T||, x)$$ for $x\in X$.

\item By Siu's semicontinuity theorem,
$$E_{\frac{1}{n}}=\{x\in X: \Theta(||T||, x)\geq \frac{1}{n}\}$$ is a holomorphic subvariety of dimension $\leq k$.

\item Write $$N=\bigcup^{\infty}_{n=1}E_{\frac{1}{n}}=N_1\cup N_2$$
where
$$N_1=\bigcup^{\infty}_{j=1}V_j$$ is the union of all irreducible holomorphic subvarieties of $\{E_{\frac{1}{n}}\}^{\infty}_{n=1}$ of pure dimension $k$,
and $N_2$ is the union of all irreducible holomorphic subvarieties of $\{E_{\frac{1}{n}}\}^{\infty}_{n=1}$ of dimension $<k$.
Since $\mathcal{H}^{2k}(N_2)=0$, we have
$$T(\phi)=\int_{N_1}<\varphi(x), \overrightarrow{T}(x)>\Theta(||T||, x)d\mathcal{H}^k(x)$$
and hence $\mbox{spt}(T)\subset \overline{N}_1$.

\item Since $N_1\subset \mbox{spt}(T)$ is of $\mathcal{H}^{2k}$-locally finite, by Proposition \ref{union of subvarieties}, $N_1$ is a holomorphic subvariety of dimension $k$.
In particular, $N_1$ is closed in $X$.

\item Since
$$N_1\subset \mbox{spt}(T)\subset\overline{N}_1=N_1$$
we have $\mbox{spt}(T)=N_1$ and hence
$$T=\sum^{\infty}_{j=1}a_jV_j$$
where each $a_j$ is the generic Lelong number of $T$ along $V_j$.
\end{enumerate}

Our result implies King's theorem since for a positive integral rectifiable $(k, k)$-current $T$,
$\Theta(||T||, x)\in \N\cup \{0\}$ and $\mbox{spt}(T)$ is of $\mathcal{H}^{2k}$-locally finite.
Our proof avoids several technical results in coordinates choosing in King's paper. With the help of Siu's semicontinuity
theorem, our proof is probably simpler conceptually.

A natural question then to ask is the characterization problem without the positivity assumption. After 3 years of King's paper published, Harvey and Shiffman (\cite{HS74}) almost answered the question.

\begin{theorem}(Harvey-Shiffman theorem)
Let $X$ be a complex manifold and $T\in \mathscr{D}_{2k}'(X)$. Then $T$ is a holomorphic $k$-chain if and only if
$T$ is locally integral rectifiable, $d$-closed, type $(k, k)$ and $\mathcal{H}^{2k+1}(\mbox{spt}(T))=0$.
\end{theorem}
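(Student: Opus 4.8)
The plan is to treat the two implications asymmetrically, exactly as was done for the Teh--Yang theorem. The implication that a holomorphic $k$-chain satisfies the four listed properties is immediate: by Lelong's theorem each irreducible component $V_j$ defines a $d$-closed current of type $(k,k)$, each $[V_j]$ is integral rectifiable, and $\mathrm{spt}(\sum_j a_j V_j)=\bigcup_j V_j$ is a holomorphic subvariety of pure real dimension $2k$, whose $(2k{+}1)$-dimensional Hausdorff measure therefore vanishes. The entire content lies in the converse, and since both the hypotheses and the conclusion are local I would first reduce to the case of a current $T$ on a coordinate ball $B\subset\C^n$.

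On $B$ I would start from the integral representation
$$T(\phi)=\int_N\langle\varphi(x),\overrightarrow{T}(x)\rangle\,\Theta(\|T\|,x)\,d\mathcal{H}^{2k}(x),\qquad N=\{x:\Theta(\|T\|,x)>0\}.$$
The first step is to exploit the type condition: because $T$ is rectifiable of type $(k,k)$, the approximate tangent plane to $N$ is a complex $k$-dimensional subspace for $\mathcal{H}^{2k}$-almost every $x$, so $\overrightarrow{T}(x)=\theta(x)\,\xi(x)$ where $\xi(x)$ is the canonical complex orientation of that plane and $\theta(x)\in\Z$ by local integer rectifiability. Thus $T$ is, up to the sign of the integer density $\theta$, a positive current.

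The key reduction is then to the positive Teh--Yang theorem. I would argue that $d$-closedness forces $\theta$ to be locally constant on $N$ away from an $\mathcal{H}^{2k}$-null set: were $\theta$ to jump across a $(2k{-}1)$-rectifiable ``seam'' inside $N$, the current $dT$ would acquire a nonzero $(2k{-}1)$-dimensional boundary piece, contradicting $dT=0$. Granting this, the reoriented current $T_+:=\int_N|\theta|\,\xi\,d\mathcal{H}^{2k}$ is positive, $d$-closed, locally integral rectifiable, of type $(k,k)$, satisfies $\mathrm{spt}(T_+)=\mathrm{spt}(T)$, and has $\mathcal{H}^{2k}$-locally finite support (the hypothesis $\mathcal{H}^{2k+1}(\mathrm{spt}(T))=0$ together with rectifiability confining the essential support to $N$). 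The Teh--Yang theorem then shows $T_+$ is a positive holomorphic $k$-chain; in particular $\mathrm{spt}(T)=\bigcup_j V_j$ is a holomorphic subvariety of pure dimension $k$. It remains to recover the signs: on the smooth locus of each irreducible $V_j$ the density $\theta$ is a constant integer $a_j$ by connectedness and closedness, these values extend across the singular strata because $\mathcal{H}^{2k+1}(\mathrm{spt}(T))=0$ rules out any $(2k{-}1)$-dimensional boundary contribution there, and hence $T=\sum_j a_j[V_j]$ is a holomorphic $k$-chain.

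The step I expect to be the main obstacle is precisely the passage from the almost-everywhere complex tangent structure to genuine analyticity of the whole support, that is, the justification that $T_+$ is $d$-closed (equivalently, that the signed multiplicity cannot oscillate). This is the heart of the matter because positivity --- which is exactly what Siu's semicontinuity theorem consumes in the Teh--Yang proof --- is unavailable for $T$ itself. An alternative to the reorientation argument, and the route originally taken by Harvey and Shiffman, is to slice by a generic holomorphic projection $\pi:B\to\C^k$: the slices $\langle T,\pi,z\rangle$ are $0$-dimensional integral currents, finite signed sums of point masses, while $\pi_*T$ is a $d$-closed current of top degree on $\C^k$ hence a constant multiple of Lebesgue measure, so that $\mathrm{spt}(T)$ presents itself as a branched multi-sheeted cover of $\C^k$; a Rado--Weierstrass-type removable-singularity argument, using $\mathcal{H}^{2k+1}(\mathrm{spt}(T))=0$ to discard the branch locus, then upgrades this cover to an analytic set. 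Either way, the delicate analytic continuation across the measure-zero exceptional set, rather than any formal manipulation of currents, is where the real work lies.
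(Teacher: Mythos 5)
The paper itself does not prove this statement: it is quoted as background and attributed to \cite{HS74}, so the only internal point of comparison is the paper's sketch of the analogous Teh--Yang theorem for real rectifiable currents, which confronts exactly the difficulty you flag. Your forward implication is fine, and your opening move in the converse --- writing $\overrightarrow{T}(x)=\theta(x)\xi(x)$ with $\xi$ the canonical complex orientation and $\theta$ an integer density --- is the same first step the paper takes in its second sketch. The genuine gap is the step you yourself call the main obstacle: the claim that $d$-closedness of $T$ forces $\theta$ to be ``locally constant on $N$ away from an $\mathcal{H}^{2k}$-null set,'' hence that the reoriented current $T_+$ is $d$-closed. The seam argument offered for this is not a proof. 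The jump set of an integer-valued measurable density on a merely rectifiable set $N$ has no a priori structure --- nothing guarantees it is $(2k-1)$-rectifiable, and ``locally constant off a null set'' is not even a meaningful constraint on a rectifiable set, since removing a null set destroys all connectivity. Worse, the assertion that the positive and negative parts of a $d$-closed integral $(k,k)$-current are separately $d$-closed is essentially equivalent to the theorem itself: granted it, positivity plus King's theorem finishes immediately. That statement resisted proof for decades (its support-hypothesis-free form is Alexander's theorem), so it cannot be recovered by a soft local heuristic that uses the complex structure only through the choice of orientation. Your closing paragraph concedes this, and the alternative you gesture at (generic projections, branched covers, a Rado-type removable singularity theorem) is a description of Harvey--Shiffman's strategy, not an argument; as submitted, the proposal has no proof of its central step.

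Two further concrete problems. First, your appeal to the Teh--Yang theorem for $T_+$ is not legitimate as stated: that theorem requires $\mbox{spt}(T)$ to be of $\mathcal{H}^{2k}$-locally finite measure, and this does \emph{not} follow from the Harvey--Shiffman hypothesis $\mathcal{H}^{2k+1}(\mbox{spt}(T))=0$ (a compact set of Hausdorff dimension $2k+\tfrac{1}{2}$ satisfies the latter and violates the former); rectifiability gives local $\mathcal{H}^{2k}$-finiteness of the carrier $N$, not of its closure $\mbox{spt}(T)$. Since your $T_+$ would be integral (not merely real) rectifiable, you should instead invoke King's theorem, which as quoted in the paper needs no support hypothesis. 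Second, the paper's own Teh--Yang sketch shows the standard way past the obstacle you are stuck on: it proves $dT'=0$ for the reoriented current not by any constancy-of-$\theta$ argument, but by induction on $k$ via Federer slicing --- the slices $<T, \pi_j, a>$ are holomorphic $(k-1)$-chains by the inductive hypothesis, one checks $<T', \pi_j, a>=<T, \pi_j, a>'$, and then Federer's formula
$$(T'\wedge \pi^*_{j}(\omega_{j}))(d\psi)=\int_{B_r(0)}(d<T', \pi_{j}, a>)(\psi)\,d\mathcal{L}^2(a)=0$$
yields closedness of $T'$. Rebuilding your outline around that induction (with a separate base case $k=1$) is the realistic way to complete it; the reorientation-plus-seam shortcut does not close.
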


But they were unhappy with the assumption on the support of $T$. They conjectured that assumption is unnecessary. This question turned out
to be very difficult. After more than twenty years, this problem was finally solved by Alexander (\cite{A}).

\begin{theorem}(Alexander theorem)
Let $X$ be a complex manifold and $T\in \mathscr{D}_{2k}'(X)$. Then $T$ is a holomorphic $k$-chain if and only if
$T$ is locally integral rectifiable, $d$-closed and of type $(k, k)$.
\end{theorem}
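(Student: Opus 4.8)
The plan is to prove only the converse, since the forward implication is immediate: a holomorphic $k$-chain $\sum a_j V_j$ is locally integral rectifiable because each compact piece of the $V_j$ is smoothly triangulable, is of type $(k,k)$ by construction, and is $d$-closed by Lelong's theorem. As the conclusion is local in nature, I would fix a coordinate ball $U \subset \C^n$ and argue there. The decisive observation is that Alexander's theorem differs from the Harvey-Shiffman theorem \emph{only} by the removal of the hypothesis $\mathcal{H}^{2k+1}(\mbox{spt}(T)) = 0$. Hence it suffices to show that this support condition is \emph{automatic}: once I establish $\mathcal{H}^{2k+1}(\mbox{spt}(T)) = 0$ from the remaining hypotheses, the Harvey-Shiffman theorem applies verbatim and produces the holomorphic chain.

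By the integral representation of a locally integral rectifiable current, $T$ is carried by the $2k$-rectifiable set $N = \{x : \Theta(\|T\|, x) > 0\}$, on which the density $\Theta(\|T\|, \cdot)$ is integer-valued and which has locally finite $\mathcal{H}^{2k}$-measure; consequently $\mathcal{H}^{2k+1}(N) = 0$ for free. The entire difficulty is therefore concentrated in the \emph{closure}: a priori $\mbox{spt}(T) = \mbox{spt}(\|T\|)$ may strictly contain $N$, and the closure $\overline{N}$ of a $2k$-rectifiable set can have Hausdorff dimension exceeding $2k$. I would attack $\overline{N} \setminus N$ through generic complex-linear projections. For a generic $\pi : \C^n \to \C^k$ and almost every $w \in \C^k$, the slice $\langle T, \pi, w \rangle$ is a well-defined $0$-dimensional integral current, that is, a finite integer combination of points in the fiber $\pi^{-1}(w)$. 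The conditions $dT = 0$ and type $(k,k)$ force this fiberwise point-data to assemble into an analytic multifunction of $w$, so that the graph over $w$ of the slice supports lies in an analytic variety; by the Wermer-Bishop-Stolzenberg circle of results such analytic structure contributes no extra $(2k+1)$-dimensional measure, and pulling back through the generic family of projections bounds $\mathcal{H}^{2k+1}(\overline{N})$.

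The main obstacle is precisely this last step, the control of the closure of the rectifiable carrier, which is the content that eluded Harvey and Shiffman for more than two decades. The tempting shortcut of splitting $T = T^+ - T^-$ into positive and negative parts and then invoking Siu's semicontinuity theorem or King's theorem on each piece fails, because $d$-closedness is not preserved under this splitting and the Lelong-number identity $n(T,x) = \Theta(\|T\|, x)$ exploited in the Teh-Yang argument is available only for positive currents. Thus the positivity-free situation genuinely requires the projection-slicing and analytic-multifunction machinery, and the delicate point is transferring the fiberwise analyticity back to a global bound on the support dimension while keeping track of signed multiplicities. Once $\mathcal{H}^{2k+1}(\mbox{spt}(T)) = 0$ is secured, the proof closes immediately by the Harvey-Shiffman theorem.
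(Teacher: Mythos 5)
The paper does not actually prove this theorem: it is stated as background and attributed to Alexander \cite{A}, and the paper's own contributions (the Teh--Yang theorems) are the real-coefficient analogues, proved by induction on $k$ with one-complex-variable slicing plus, in the positive case, Siu's semicontinuity. So there is no in-paper argument to compare yours against; your proposal has to be judged as a standalone proof of Alexander's theorem, and judged that way it has a genuine gap.

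Your reduction is sound as far as it goes: the forward direction is routine, and since a holomorphic $k$-chain has $2k$-dimensional support, Alexander's theorem is indeed logically equivalent to showing that the remaining hypotheses force $\mathcal{H}^{2k+1}(\mbox{spt}(T))=0$, after which Harvey--Shiffman applies. The gap is that the step carrying all the content --- controlling $\overline{N}\setminus N$ --- is asserted rather than argued. The claim that $dT=0$ and the $(k,k)$ condition ``force this fiberwise point-data to assemble into an analytic multifunction,'' and that the ``Wermer--Bishop--Stolzenberg circle of results'' then bounds $\mathcal{H}^{2k+1}(\overline{N})$, is precisely the statement that stood open for more than twenty years; naming the machinery is not applying it. Worse, the specific slicing you propose destroys the information you need: slicing by a projection $\pi:\C^n\to \C^k$ all the way down to dimension $0$ leaves hypotheses that are vacuous fiberwise, since every integral $0$-current is a finite sum of points and the conditions ``$d$-closed'' and ``type $(0,0)$'' carry no content in dimension zero. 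Any analytic structure must therefore come from moment (power-sum) identities in the parameter $w$, i.e.\ from a family argument in the style of Harvey--Shiffman, and it is exactly at the passage from an almost-everywhere-defined analytic multifunction to an honest holomorphic chain that their support hypothesis was used. This is why both Alexander's actual proof and the paper's real-coefficient analogue slice down one complex dimension at a time, preserving a nontrivial closedness condition at each stage, and isolate the hard base case $k=1$, which Alexander handles with his results on sets of finite linear measure and analytic structure in polynomial hulls --- none of which appears in your sketch. As written, your proposal is a correct framing of the problem followed by a citation-shaped placeholder where the proof should be.
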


We also consider a similar question for real rectifiable currents (see \cite{TY2}).

\begin{theorem}(Teh-Yang)
Let $X$ be a complex manifold and $T\in \mathscr{D}'_{2k}(X)$. Then $T$ is a real holomorphic $k$-chain if and only if
$T$ is locally real rectifiable of type $(k, k)$, $d$-closed and $\mbox{spt}(T)$ is of $\mathcal{H}^{2k}$-locally finite.
\end{theorem}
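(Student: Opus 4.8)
The plan is to prove the nontrivial converse, since the forward implication is immediate. If $T=\sum_j a_jV_j$ is a real holomorphic $k$-chain, then by Lelong's theorem each $V_j$ is $d$-closed of type $(k,k)$, and each $V_j$ is integral rectifiable, hence the locally finite real combination $T$ is locally real rectifiable, $d$-closed and of type $(k,k)$; moreover $\bigcup_j V_j$ is a holomorphic $k$-subvariety, so it has locally finite $\mathcal{H}^{2k}$ measure and contains $\mbox{spt}(T)$. The content is therefore entirely in the converse.

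For the converse I would reduce everything to the single assertion that $E:=\mbox{spt}(T)$ is a holomorphic subvariety of pure dimension $k$. Granting this, write $E=\bigcup_j V_j$ as its decomposition into irreducible components. On the regular part of each $V_j$ the $d$-closed rectifiable current $T$ of type $(k,k)$ restricts to a locally constant multiple $a_j\in\R$ of the current of integration: a jump of the multiplicity across a hypersurface of $V_j^{reg}$ would contribute an interior boundary, contradicting $dT=0$. Since $\mathcal{H}^{2k}(V_j^{sing})=0$, this gives $T=\sum_j a_jV_j$, with $a_j$ the generic density of $T$ along $V_j$ together with its sign. So the whole difficulty is the analyticity of $E$. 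As a preliminary, I would prove a tangent-plane rigidity lemma: from the integral representation $T(\phi)=\int_N\langle\varphi(x),\overrightarrow{T}(x)\rangle\,\Theta(\|T\|,x)\,d\mathcal{H}^{2k}(x)$ and the type-$(k,k)$ hypothesis, the approximate tangent $\overrightarrow{T}(x)$ is, for $\mathcal{H}^{2k}$-almost every $x\in N=\{\Theta(\|T\|,\cdot)>0\}$, the orientation of a complex $k$-dimensional subspace of $T_xX$, up to a sign $\sigma(x)\in\{\pm1\}$. This is precisely the step that replaces positivity: in the positive theorem, positivity fixed both the plane and its orientation, whereas here type $(k,k)$ alone fixes the plane but leaves the sign free.

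The main obstacle is the analyticity of $E$ itself. In the positive case one applies Siu's semicontinuity theorem to the super-level sets $E_{1/n}$ of the Lelong number, but Siu's theorem requires a \emph{positive} $d$-closed current, and $T$ is no longer positive; worse, the naive positive companion $\sum_j|a_j|V_j$ need not be $d$-closed, because the cancellation of boundaries forced by $dT=0$ is destroyed once the signs are discarded, so one cannot feed it into either Siu's theorem or the positive Teh--Yang theorem. My plan is instead to argue directly: first show that off a relatively closed set $\Sigma\subset E$ with $\mathcal{H}^{2k-1}(\Sigma)=0$, the almost-everywhere complex tangent planes together with $dT=0$ make $E\setminus\Sigma$ a pure $k$-dimensional complex submanifold; then invoke a Bishop--Shiffman type extension theorem to conclude that $\overline{E\setminus\Sigma}=E$ is a holomorphic subvariety. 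Here the hypothesis that $\mbox{spt}(T)$ is of $\mathcal{H}^{2k}$-locally finite is exactly what supplies the locally finite $2k$-volume required by that extension theorem, and it is the analogue, in this signed setting, of the role played by Stolzenberg's theorem and Proposition~\ref{union of subvarieties} in the positive argument.

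The delicate point, and where I expect the genuine work to lie, is the control of $\Sigma$: one must show that the set where the leaf-and-multiplicity structure degenerates (branch points, accumulation of leaves, points where the complex tangent plane fails to exist) is $\mathcal{H}^{2k-1}$-negligible, so that the extension theorem is applicable, and simultaneously that the resulting locally finite sum $\sum_j a_jV_j$ reproduces $T$ on the nose rather than differing from it by a flat current supported on $\Sigma$. Once $E$ is known to be a subvariety, the bookkeeping of the previous paragraph finishes the proof and, as with the positive version, recovers King's, Harvey--Shiffman's and Alexander's characterizations as special cases, now without any positivity assumption.
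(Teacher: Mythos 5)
Your proposal founders on two connected points, and the first is a misjudgment that causes you to discard the strategy that actually works. You dismiss the positive-companion route on the grounds that discarding signs destroys $d$-closedness, so that neither Siu's theorem nor the positive Teh--Yang theorem can be invoked. But the paper's proof is precisely this route. The companion is not $\sum_j|a_j|V_j$ (which would presuppose the conclusion); it is defined at the level of the integral representation, $T'(\phi):=\int_W\langle\phi(z),\xi(z)\rangle\,d\mathcal{H}^{2k}(z)$ with $\xi(z)=\pm\theta(z)\overrightarrow{T}(z)$, the signs chosen pointwise so that $T'$ is a positive $(k,k)$-current with $\mbox{spt}(T')=\mbox{spt}(T)$. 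The entire technical content of the theorem is then to prove $dT'=0$, and this is done by induction on $k$ via Federer slicing: for a.e.\ $a$ the slices $\langle T,\pi_j,a\rangle$ are locally real rectifiable, $d$-closed, of type $(k-1,k-1)$, with $\mathcal{H}^{2k-2}$-locally finite support (the last by an Eilenberg-type inequality applied to $\mathcal{H}^{2k}(\mbox{spt}(T))$), hence by the induction hypothesis they are real holomorphic $(k-1)$-chains; since $\langle T,\pi_j,a\rangle'=\langle T',\pi_j,a\rangle$ a.e., the slices of $T'$ are $d$-closed, and testing $T'(d\phi)$ on forms of the shape $\omega_{j_1}\wedge\psi$ through the slicing formula yields $dT'=0$. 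Then the positive version of the theorem (where Siu's semicontinuity theorem does the work) applies to $T'$, and $T$ inherits the chain structure with signed coefficients. So the cancellation you fear is destroyed is in fact preserved --- a priori by the induction, a posteriori because $T'$ turns out to be a holomorphic chain itself. Your proposal also has no base case, whereas the induction must start at $k=1$, which the paper handles by modifying Alexander's argument.

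Your replacement route has a genuine gap exactly where you place the phrase ``the delicate point'': the claim that off a relatively closed $\Sigma\subset E$ with $\mathcal{H}^{2k-1}(\Sigma)=0$ the set $E\setminus\Sigma$ is a pure $k$-dimensional complex submanifold is not a step to be checked, it is essentially the whole theorem. Almost-everywhere existence of complex approximate tangent planes gives no manifold structure at any single point, and none of your hypotheses controls the $(2k-1)$-dimensional size of the degenerate set: the assumption available is $\mathcal{H}^{2k}$-local finiteness of the support, which is of the wrong dimension to bound $\Sigma$, so the Bishop--Shiffman extension theorem you want to invoke has no verified hypothesis. Without a mechanism of the strength of Siu's theorem (unavailable for lack of positivity) or the inductive slicing argument above, there is no known passage from ``a.e.\ complex tangents plus $dT=0$'' to ``submanifold off an $\mathcal{H}^{2k-1}$-null set''; this is essentially the support-hypothesis difficulty that Harvey--Shiffman could not remove and that took Alexander over twenty years to resolve even in the integral-coefficient case. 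As written, the proposal defers the entire difficulty to an unproved lemma.
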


We sketch a proof by induction on $k$:
\begin{enumerate}
\item When $k=1$, we observe that Alexander's proof can be modified for our case.

\item For $k>1$, since this is a local problem, we may assume that $T\in RR_{k, k}(\Delta)$ where
$\Delta:=\Delta(0, r)$ is the open ball of radius $r$ centered at the origin of $\C^n$.

\item Since $T$ is real rectifiable, we have an integral representation of the form
$$T(\phi)=\int_W<\phi(x), \overrightarrow{T}(x)>\theta(x)d\mathcal{H}^{2k}(x)$$
where $W\subset \Delta$ is some $\mathcal{H}^{2k}$-measurable set.
Let $\xi(z):=\pm \theta(z)\overrightarrow{T}(z)$ which is a simple $2k$-vector that represents the naturally
oriented complex tangent plane to $W$ at $z$ for $\mathcal{H}^{2k}$-a.e. $z\in W$ and we choose the $\pm$ signs
so that
$$T'(\phi):=\int_W<\phi(z), \xi(z)>d\mathcal{H}^{2k}(z)$$
is a positive $(k, k)$-current. Note that $\mbox{spt}(T)=\mbox{spt}(T')$.

\item Let $\pi_j: \Delta \rightarrow \C$ be the projection to the $j$-th coordinate. From some basic properties of Federer's theory of slices, we have
$<T, \pi_j, a>\in RR^{loc}_{k-1, k-1}(\Delta)$, $\mbox{spt}<T, \pi_j, a>\subset \mbox{spt}(T)\cap \pi^{-1}_j(a)$ and
$$d<T, \pi_j, a>=<dT, \pi_j, a>=0$$
for almost all $a\in B_r(0)\subset \C$. Also, by the inequality
$$\int^*_{B_r(0)}\mathcal{H}^{2k-2}(\mbox{spt}(T)\cap \pi^{-1}_j(a))d\mathcal{H}^2(a)\leq \frac{\Omega(2k-2)\Omega(2)}{\Omega(2k)}\mathcal{H}^{2k}(\mbox{spt}(T))<\infty$$
This implies that $\mbox{spt}(T)\cap \pi^{-1}_j(a)$ is $\mathcal{H}^{2k-2}$-locally finite for almost
all $a\in B_r(0)$. By the induction hypothesis, we know that the slice $<T, \pi_j, a>$ is a real holomorphic $(k-1)$-chain for almost all $a\in B_r(0)$.

\item Let $<T, \pi_j, a>'$ be the positive $(k-1, k-1)$-current associated to $<T, \pi_j, a>$. Since
$$<T, \pi_j, a>'=<T', \pi_j, a>$$
for almost all $a\in B_r(0)$ (see \cite{A}), $<T', \pi_j, a>$ is a positive real holomorphic $(k-1)$-chain for almost all $a\in B_r(0)$.

\item To show that $dT'=0$, we only need to check that $T'(d\phi)=0$ for $\phi\in A^{k-1, k}(\Delta)$ since it is similar for $\phi\in A^{k, k-1}(\Delta)$.
From the equality
\begin{align*}
4dz_j\wedge d\overline{z}_{\ell}&=(dz_j+dz_{\ell})\wedge \overline{(dz_j+dz_{\ell})}-(dz_j-dz_{\ell})\wedge \overline{(dz_j-dz_{\ell})}\\
&+i(dz_j+idz_{\ell})\wedge \overline{(dz_j+idz_{\ell})}-i(dz_j-idz_{\ell})\wedge \overline{(dz_j-idz_{\ell})}
\end{align*}
not difficult to see that by some change of variables, it is enough to consider $\phi$ of the form
$$\phi=\omega_{j_1}\wedge \psi$$
where $\omega_{j_1}=dz_{j_1}\wedge d\overline{z}_{j_1}, \psi=fdz_{j_2}\wedge \cdots \wedge dz_{j_{k-1}}\wedge d\overline{z}_{\ell_2}\wedge \cdots \wedge d\overline{z}_{\ell_k}$
for some smooth function $f$. Then
$$T'(d\phi)=T'(\omega_{j_1}\wedge d\psi)=(T'\wedge \pi^*_{j_1}(\omega_{j_1}))(d\psi)$$
By a Federer's theorem,
$$(T'\wedge \pi^*_{j_1}(\omega_{j_1}))(d\psi)=\int_{B_r(0)}<T', \pi_{j_1}, a>(d\psi)d\mathcal{L}^2(a)=\int_{B_r(0)}(d<T', \pi_{j_1}, a>)(\psi)d\mathcal{L}^2(a)=0$$
since $<T', \pi_{j_1}, a>$ is a real holomorphic $(k-1)$-chain and hence $d$-closed.

\item Then $T'\in RR^{loc}_{k, k}(\Delta)$ is a positive, $d$-closed and $\mbox{spt}(T')$ is of $\mathcal{H}^{2k}$-locally finite.
By our result before, $T'$ is a positive real holomorphic $k$-chain and so is $T$.
\end{enumerate}

\section{Applications}
Let $M$ be a Riemannian manifold. Let $N^{loc}_k(M)$ be the space of locally normal currents of dimension $k$ on $M$.
\begin{lemma}
If $T\in N^{loc}_k(M)$ has $\mathcal{H}^k$-locally finite support, then $T$ is locally real rectifiable.
\end{lemma}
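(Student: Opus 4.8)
The plan is to recognize this as the real-coefficient version of the classical structure theorem for normal currents and to isolate exactly where the hypothesis on $\mbox{spt}(T)$ is used. Both the hypothesis and the conclusion are local, so I would first pass to a coordinate chart identified with an open subset of $\R^n$ and, after cutting off by a smooth compactly supported function, assume outright that $T$ is normal, $\mathbb{M}(T)+\mathbb{M}(\partial T)<\infty$, with $\mbox{spt}(T)$ of finite $\mathcal{H}^k$ measure. Then $\|T\|$ is a finite Radon measure carried by the closed set $\mbox{spt}(T)$, and the goal is to produce the integral representation $T(\phi)=\int_R \langle \phi, \overrightarrow{T}\rangle\,\Theta\, d\mathcal{H}^k$ over a countably $k$-rectifiable set $R$, with $\overrightarrow{T}$ a simple unit $k$-vector tangent to $R$ and $\Theta\in L^1(\mathcal{H}^k)$; this is precisely the assertion that $T$ is locally real rectifiable in the sense defined above.

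My main route is through White's rectifiability criterion, which says that a flat $k$-chain of finite mass with real coefficients is rectifiable as soon as, for almost every orthogonal projection $p:\R^n\to\R^k$, almost every slice $\langle T,p,y\rangle$ is a rectifiable $0$-chain. Since $T$ is normal it is in particular a flat chain of finite mass, and Federer's slicing theory gives, for a.e.\ $y$, a finite-mass $0$-current $\langle T,p,y\rangle$ with $\mbox{spt}\langle T,p,y\rangle\subset \mbox{spt}(T)\cap p^{-1}(y)$. Here the support hypothesis enters decisively: by the integral-geometric (coarea) inequality $\int^{*}\mathcal{H}^{0}(\mbox{spt}(T)\cap p^{-1}(y))\,d\mathcal{H}^{k}(y)\le C\,\mathcal{H}^{k}(\mbox{spt}(T))<\infty$, so for every $p$ and almost every $y$ the slice is supported on finitely many points; a finite-mass flat $0$-chain with finite support is a finite real combination of Dirac currents, hence a rectifiable $0$-chain. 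White's criterion then yields that $T$ itself is rectifiable, and patching the conclusion over a countable cover of $M$ gives local real rectifiability.

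The step I expect to be the real obstacle is the deep structural input, which in this route is packaged inside White's criterion (whose proof ultimately rests on Besicovitch--Federer theory). If one prefers a self-contained argument, the obstacle resurfaces explicitly: I would decompose $\mbox{spt}(T)=R\sqcup P$ by the structure theorem for sets of finite $\mathcal{H}^k$ measure, with $R$ countably rectifiable and $P$ purely unrectifiable, and prove directly that $\|T\|(P)=0$. For this the Besicovitch--Federer projection theorem gives $\mathcal{H}^k(p(P))=0$ for a.e.\ projection $p$, and feeding this into the same slice-mass estimate forces the mass of $T$ lying over $P$ to vanish, so that $\|T\|(P)=0$ and $\|T\|$ is carried by $R$. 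Two upgrades then remain and are where the genuine subtlety sits: absolute continuity $\|T\|\ll\mathcal{H}^k$, which relies on normality (the finiteness of $\mathbb{M}(\partial T)$) to preclude concentration on sets of dimension below $k$, since for instance a Dirac-type $k$-current fails to be normal; and the tangency of $\overrightarrow{T}$ to $R$ for $\|T\|$-a.e.\ point, supplied by Federer's structure theory for the orienting $k$-vector of a normal current carried by a rectifiable set. Throughout, the chief care is that every tool used, slicing, the coarea inequality, and White's criterion, is valid for real rather than integer multiplicities, so the argument legitimately produces a real rectifiable current.
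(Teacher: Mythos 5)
Your proposal is correct, but your primary route is genuinely different from the paper's. The paper follows the classical density-based path: starting from the representation $T(\varphi)=\int \langle \varphi,\vec{T}\rangle \, d\|T\|$ valid for normal currents, it decomposes $\mbox{spt}(T)=R\cup P$ by the Besicovitch--Federer structure theorem, uses Federer--Fleming to get $\|T\|(P)=0$, invokes absolute continuity of $\|T\|$ with respect to $\mathcal{H}^k$ to write $\|T\|=\mathcal{H}^k\lfloor \theta$, and then shows that the density $\Theta^k(\|T\|,x)$ exists and is positive $\|T\|$-a.e., so that Simon's Rectifiability Theorem \cite[Thm 32.1]{S83} (which crucially yields real, not integer, multiplicity) applies. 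You instead reduce everything to White's slicing criterion for finite-mass flat chains over an arbitrary coefficient group: the Eilenberg inequality turns $\mathcal{H}^k$-finiteness of the support into finiteness of the support of almost every $0$-dimensional slice, and a finite-mass $0$-chain supported on finitely many points is a finite real combination of Dirac masses, hence rectifiable. Both approaches ultimately rest on Besicovitch--Federer projection theory (explicit in the paper, hidden inside the proof of White's theorem in yours), but yours trades the absolute-continuity and density computations for a single later, more general theorem; it also exploits the support hypothesis through exactly the same Eilenberg-type slice estimate that the paper itself uses in its Section 2 sketch of the Teh--Yang characterization. Your fallback sketch (structure theorem, $\|T\|(P)=0$, absolute continuity, tangency) is essentially the paper's own proof. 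If you write up the White route in full, make explicit two standard but nontrivial identifications: that a locally normal current, after cutting off by a smooth compactly supported function, is a finite-mass real flat chain in Fleming's sense (so that White's theorem and its slicing theory apply), and that White's notion of rectifiable real flat chain coincides with the mass-approximation definition of (locally) real rectifiable current adopted in this paper.
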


\begin{proof}
Since this is a local problem and locally real rectifiability is invariant under
isometric isomorphism, we may assume that $T$ is a normal current on $\R^n$ where $n$ is the dimension of $M$
and the support of $T$ is $\mathcal{H}^k$-finite.
Since $T$ is normal, by \cite[pg181]{KP08}, we may write $T$ as the form
$$T(\varphi) = \int_{\mbox{spt}(T)} \ \langle \varphi(x) , \vec{T}(x) \rangle \ d\| T \|(x) $$

By the assumption, $\mbox{spt}(T)$ is $\mathcal{H}^k$-finite, and by Federer's structure theorem(\cite[Thm 13.2]{S83}),
$\mbox{spt}(T)$ can be decomposed into a disjoint union
$$\mbox{spt}(T) = R \bigcup P$$
where $R$ is countably $k$-rectifiable and $P$ is purely $k$-unrectifiable and
$\mathcal{H}^k(p(P))=0$ for almost all orthogonal projections $p:\mathbb{R}^n \rightarrow \mathbb{R}^k$.
By \cite[Thm 8.5]{FF60}, $\| T \|(P) = 0$.
By \cite[pg 185]{S83}, $||T||$ is absolutely continuous with respect to $\mathcal{H}^k$, then by \cite[Remark 4.8]{S83},
$||T||=\mathcal{H}^k\lfloor \theta$
for some nonnegative $\mathcal{H}$-measurable function $\theta$ whose support is contained in $\mbox{spt}(T)$.
We have
$$T(\varphi) = \int_{R} \ \langle \varphi(x) , \vec{T}(x) \rangle \ \theta(x) d\mathcal{H}^k(x)$$
and
$$\| T \|(B) = \int_{B} \theta(x) \ d\mathcal{H}^k(x)$$
for every Borel set $B$.
Since $R$ is $\mathcal{H}^k$-finite, it is a general fact that the density
$\Theta^k(\mathcal{H}^k , R , x) = \lim_{r\rightarrow 0^+} \frac{\mathcal{H}^k(B_r(x)\bigcap R)}{\Omega(k)r^k} = 1$ for $\mathcal{H}^k$ - a.e. $x \in R$.
By \cite[pg 63]{S83}, we have
\begin{align*}
\theta(x) &= \lim_{r\rightarrow 0^+} \frac{\| T \|(B_{r}(x))}{\mathcal{H}^k(B_r(x)\bigcap R)}= \lim_{r\rightarrow 0^+} \frac{\| T \|(B_{r}(x))}{\Omega(k)r^k} \frac{\Omega(k)r^k}{\mathcal{H}^k(B_r(x)\bigcap R)}\\
&=\lim_{r\rightarrow 0^+} \frac{\| T \|(B_{r}(x))}{\Omega(k)r^k}
 = \Theta^k(\| T \| , x)
\end{align*}
exists for $\mathcal{H}^k$-a.e. $x\in R$.
Thus,
$\Theta^{\ast k}(\| T \| , x) > 0$ for $\| T \|$-a.e. $x \in \mathbb{R}^n$.
By \cite[Thm 32.1]{S83}, $T$ is real rectifiable. This completes the proof.
\end{proof}

\begin{corollary}
Let $X$ be a compact complex manifold of dimension $n$. Let $P\in Lip_{2k}(X; \Q)$. If $dP=0$ and $d^cP=0$, then
the $(k, k)$-part $P_{k, k}$ of $P$ is a holomorphic $k$-chain with $\Q$-coefficients.
\end{corollary}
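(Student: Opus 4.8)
The plan is to isolate the single bidimension component $P_{k,k}$, show it meets the hypotheses of the Teh--Yang characterization of real holomorphic chains, and then verify that the resulting chain has rational coefficients. First I would record that the two hypotheses $dP=0$ and $d^cP=0$ are together equivalent to $\partial P=0$ and $\overline{\partial}P=0$: since $d=\partial+\overline{\partial}$ and $d^c$ differs from $i(\overline{\partial}-\partial)$ only by a nonzero constant, the operators $\{d,d^c\}$ span the same space as $\{\partial,\overline{\partial}\}$. Writing the bidimension decomposition $P=\sum_{p+q=2k}P_{p,q}$, the operator $\partial$ sends $P_{p,q}$ into bidimension $(p-1,q)$ and $\overline{\partial}$ sends it into $(p,q-1)$; as $(p,q)$ ranges over $p+q=2k$ these target types are all distinct, so $\partial P=0$ forces $\partial P_{p,q}=0$ for every $(p,q)$, and likewise $\overline{\partial}P=0$ forces $\overline{\partial}P_{p,q}=0$. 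In particular $\partial P_{k,k}=\overline{\partial}P_{k,k}=0$, hence $dP_{k,k}=0$; by construction $P_{k,k}$ is of type $(k,k)$, and since $P$ is real and conjugation preserves the type $(k,k)$, the current $P_{k,k}$ is real as well.

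Next I would verify the hypotheses of the Lemma. The projection of a $2k$-form onto its $(k,k)$-part is a bounded linear operation on the pointwise exterior algebra, so $\mathbb{M}(P_{k,k})\leq C\,\mathbb{M}(P)<\infty$ because $P$, being a finite Lipschitz chain on the compact manifold $X$, has finite mass; combined with $dP_{k,k}=0$ this shows $P_{k,k}$ is normal, hence locally normal. Moreover $\mbox{spt}(P_{k,k})\subset \mbox{spt}(P)$, and the latter is the image of a finite polyhedral chain under a Lipschitz map and so is $\mathcal{H}^{2k}$-finite. The Lemma therefore applies and gives that $P_{k,k}$ is locally real rectifiable. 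Note that this step is genuinely needed: the orienting multivector $[\overrightarrow{P}]_{k,k}$ of $P_{k,k}$ is generally neither simple nor of unit length, so rectifiability does not follow directly from that of $P$. Now $P_{k,k}$ is locally real rectifiable, of type $(k,k)$, $d$-closed, with $\mathcal{H}^{2k}$-locally finite support, so the Teh--Yang characterization of real holomorphic chains applies and shows $P_{k,k}$ is a real holomorphic $k$-chain, say $P_{k,k}=\sum_j a_j V_j$ with $a_j\in\R$.

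It remains to upgrade the $a_j$ to rational numbers, and this is the step I expect to be the main obstacle. Using the integral representation $P(\phi)=\int_N\langle\phi,\overrightarrow{P}\rangle\,\theta\,d\mathcal{H}^{2k}$, the coefficient $a_j$ is the generic value of the density of $P_{k,k}$ along the irreducible component $V_j$. Because $V_j\subset\mbox{spt}(P)$ is a $2k$-rectifiable set of the same dimension as $P$, the approximate tangent plane of $P$ agrees with $T_xV_j$ for $\mathcal{H}^{2k}$-almost every $x\in V_j$; at such points $\overrightarrow{P}(x)$ is of type $(k,k)$, so the $(k,k)$-projection acts as the identity and the density of $P_{k,k}$ at $x$ equals the density $\theta(x)$ of $P$ itself. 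Since $P$ has $\Q$-coefficients, its density is rational $\mathcal{H}^{2k}$-almost everywhere, and the generic density along $V_j$ is a single such rational value; hence $a_j\in\Q$ for every $j$, and $P_{k,k}$ is a holomorphic $k$-chain with $\Q$-coefficients. The delicate points here are the almost-everywhere coincidence of the two tangent planes and the identification of the generic density with the holomorphic-chain coefficient, which is precisely where the density theory underlying the Lemma and the structure theory behind the Teh--Yang theorem feed back into the argument.
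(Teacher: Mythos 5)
Your proposal is correct and follows essentially the same route as the paper's own proof: isolate the $(k,k)$-component, check it is $d$-closed of type $(k,k)$ with finite mass and $\mathcal{H}^{2k}$-finite support, apply the rectifiability Lemma and then the Teh--Yang characterization, and finally deduce rationality of the coefficients from $P$ being a rational Lipschitz chain. Your last paragraph simply fills in the density/tangent-plane argument that the paper compresses into the single sentence ``since $P$ is a rational Lipschitz chain, $P_{k,k}$ is a holomorphic $k$-chain with $\Q$-coefficients.''
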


\begin{proof}
Note that $\mbox{spt}(P)$ is of $\mathcal{H}^{2k}$-finite, so is $\mbox{spt}(P_{k, k})$. By the closedness assumption, $P_{k, k}$ is $d$-closed.
Since the mass $\M(P_{k, k})$ is finite, by Lemma above $P_{k, k}$ is real rectifiable. Then by our result, $P_{k, k}$ is a real holomorphic $k$-chain.
But since $P$ is a rational Lipschitz chain, $P_{k, k}$ is a holomorphic $k$-chain with $\Q$-coefficients.
\end{proof}

\begin{theorem}
Let $X$ be a compact complex manifold of dimension $n$. The Hodge conjecture of the Bott-Chern cohomology of $X$ is true.
\end{theorem}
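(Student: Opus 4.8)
The plan is to deduce the theorem directly from the Corollary above by dualizing through Poincar\'e duality. Concretely, I would fix a class $\alpha \in H^{p,p}_{BC}(X;\Q)$ and, unwinding the definition
$$H^{p,p}_{BC}(X;\Q) = PD^{-1}\big(H^{BC}_{2(n-p)}(X;\Q)\big) \cap H^{p,p}_{BC}(X),$$
observe that $\alpha$ is of pure type $(p,p)$ and that its Poincar\'e dual $PD(\alpha)$ lies in the image of the rational Bott-Chern homology $H^{BC}_{2(n-p)}(X;\Q)$ inside $H^{BC}_{2(n-p)}(X;\C)$. Since by definition this group is the homology of the complex $\{T \in Lip_{2(n-p)}(X;\Q) : \partial T = 0,\ \overline{\partial}T = 0\}$ modulo $Im\partial\overline{\partial}$, I can choose a representative $P \in Lip_{2(n-p)}(X;\Q)$ of $PD(\alpha)$ satisfying $\partial P = 0$ and $\overline{\partial}P = 0$.

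Next I would note that $\partial P = 0$ and $\overline{\partial}P = 0$ together are equivalent to $dP = 0$ and $d^cP = 0$, since $d = \partial + \overline{\partial}$ and $d^c$ is, up to a real scalar, $i(\overline{\partial} - \partial)$. Thus $P$ meets exactly the hypotheses of the preceding Corollary, and I may conclude that the $(n-p,n-p)$-part $P_{n-p,n-p}$ of $P$ is a holomorphic $(n-p)$-chain with $\Q$-coefficients. All the analytic substance of the argument---the rectifiability lemma together with the Teh-Yang characterization of real holomorphic chains---is consumed at this single step, via the Corollary.

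It then remains to check that $PD^{-1}([P_{n-p,n-p}]) = \alpha$, which identifies $P_{n-p,n-p}$ as a representative of $\alpha$. Here I would use that the closedness conditions $\partial P = \overline{\partial}P = 0$ respect the bidimension splitting, so each pure-type component $P_{a,b}$ is separately $\partial$- and $\overline{\partial}$-closed and determines a class in $H^{BC}_{a,b}(X)$; under the decomposition $H^{BC}_{2(n-p)}(X;\C) \cong \bigoplus_{a+b=2(n-p)} H^{BC}_{a,b}(X)$ this gives $[P] = \sum_{a+b=2(n-p)}[P_{a,b}]$. Because Poincar\'e duality sends the pure type $(p,p)$ to bidimension $(n-p,n-p)$, the class $PD(\alpha)$ sits in the single summand $H^{BC}_{n-p,n-p}(X)$; comparing components in $[P] = PD(\alpha)$ forces $[P_{n-p,n-p}] = PD(\alpha)$ and the vanishing of the remaining components. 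Applying $PD^{-1}$ yields $\alpha = PD^{-1}([P_{n-p,n-p}])$, exhibiting $\alpha$ as represented by the holomorphic $(n-p)$-chain $P_{n-p,n-p}$ with $\Q$-coefficients.

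The main obstacle I anticipate is precisely this last bookkeeping step: making rigorous that extracting the $(n-p,n-p)$-part of the current $P$ commutes with passing to Bott-Chern homology classes and is compatible with both the Hodge-type decomposition and Poincar\'e duality, so that the chain produced by the Corollary represents the given class $\alpha$ rather than merely some class of the same total bidimension. Everything preceding it is either definitional (the existence of a rational Lipschitz representative) or already in hand (the Corollary and the characterization theorems underlying it), so the theorem follows once this compatibility is settled.
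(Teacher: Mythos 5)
Your proposal is correct and takes essentially the same approach as the paper: pick a rational Lipschitz representative $P$ of $PD(\alpha)$ (so $dP = d^cP = 0$), apply the preceding Corollary to conclude that $P_{n-p,n-p}$ is a holomorphic $(n-p)$-chain with $\Q$-coefficients, and verify that this component still represents $\alpha$. The bookkeeping step you flag as the main obstacle is exactly what the paper resolves in one line—writing $e = P + dd^ca$ and taking the pure-bidimension part to get $e = P_{k,k} + dd^ca_{k+1,k+1}$—which is your component-comparison argument carried out at the level of currents rather than via the direct-sum decomposition of Bott-Chern homology.
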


\begin{proof}
Note that for $[e]\in H^{n-k, n-k}_{BC}(X; \Q)$,
$$e=P+dd^c a=P_{k, k}+dd^ca_{k+1, k+1}$$ for some $P\in Lip_{2k}(X; \Q)$ and some
$a\in \mathscr{D}'_{2(k+1)}(X)$. Then $P_{k, k}$ is $d$- and $d^c$-closed. By result above, $P_{k, k}$ is a holomorphic $k$-chain with
rational coefficients. This proves the Hodge conjecture of the Bott-Chern cohomology of $X$.
\end{proof}

\section{Bott-Chern differential cohomology}
Suppose that $M$ is an oriented Riemannian manifold without boundary. Let
$C_{k+1}(M)$ be the group of singular $(k+1)$-chains on $M$ and $Z_k(M)$ be the group of singular $k$-cycles on $M$.
A $k$-th Cheeger-Simons differential character is a group homomorphism
$$f:Z_k(M) \rightarrow {\mathbb R} /\Z$$
such that there is a $(k+1)$-form $w$ with

$$f(\partial \beta)\equiv \int_{\beta}w \mod \Z,  \mbox{ for all }\beta\in C_{k+1}(M)$$

The theory of differential characters was developed by Cheeger and Simons around 1970,
which can be used to detect if a Riemannian manifold can
be conformally immersed in some Euclidean spaces.

There are two important features of the differential character functor $\widehat{H}^{\bullet}$.
\begin{enumerate}
\item
$\widehat{H}^{\bullet}(M)$ is a graded-commutative ring.
\item
Existence of refined Chern classes.
\end{enumerate}

Hopkins and Singer in their paper \cite{HS} extended Cheeger-Simons differential character groups and related it to Witten's fivebrane partition functions in M-theory. About 15 years ago, Harvey and Lawson introduced a homological apparatus to study differential characters (\cite{HLZ, HL06, HL08, H}),
and gave many different presentations of differential characters. They studied a $\overline{\partial}$-analog of differential characters for complex manifolds using spark complexes and also gave a new presentation of Deligne cohomology. We refer to Freed's paper \cite{F} for more results about generalized differential cohomology.

For a holomorphic vector bundle $E$ over a complex manifold $X$ with two Hermitian metrics $h_1, h_2$, Bott and Chern (\cite{BC}) found that
the Chern forms $c_k(E, h_i)$ for $i=1, 2$ associated to the Hermitian metrics are related by the following equation
$$c_k(E, h_1)-c_k(E, h_2)=dd^cw$$
for some $w\in \Omega^{k-1, k-1}(X)$. This means that these Chern forms define a same class in the Bott-Chern cohomology of $X$.
For $h$ a Hermitian metric on $E$, we
define
$$bc_k(E):=[c_k(E, h)]\in H^{k, k}_{BC}(X)$$
and the total Bott-Chern class to be
$$bc(E):=\sum_k bc_k(E)$$

Let $\widehat{H}^k(X; \C)$ be the $k$-th differential character group with complex coefficients. A natural question is what to fill in the question marks of the following diagrams
$$\xymatrix{  \textcolor[rgb]{1.00,0.00,1.00}{?}    \ar[r] \ar[d]    & \widehat{H}^k(X; \C)   \ar[d] &  &\mbox{ and } &
\textcolor[rgb]{1.00,0.00,0.00}{?}  \ar[r] \ar[d] &   \widehat{c}_k(E, h) \ar[d] \\
H^k_{BC}(X) \ar[r] & H^k_{DR}(X; \C)    & && bc_k(E) \ar[r] &    c_k(E)} $$

Our motivation is to construct a functor that play the role of differential cohomology for Bott-Chern cohomology
and a refined Bott-Chern classes that processes all natural properties that they must have.
Since the theory of sparks introduced by Harvey and Lawson gives an easy presentation of the Cheeger-Simons' differential characters,
we follow their construction to introduce Bott-Chern sparks and use them to construct a Bott-Chern differential cohomology.
We recall that on a smooth oriented compact manifold $M$ of dimension $n$, a degree $k$ spark is a triple
$$(a, e, r)\in \mathscr{D}'_{n-k+1}(M)\oplus A^k(M) \oplus I_{n-k}(M)$$
such that
$$da=e-r$$
where $I_{n-k}(M)$ is the space of dimension $(n-k)$ integral currents on $M$.
Two degree $k$ sparks are equivalent $(a_1, e, r_1)\sim (a_2, e, r_2)$ if
$$\left\{
    \begin{array}{ll}
      a_1-a_2=db+s\\
      r_1-r_2=-ds
    \end{array}
  \right.
$$
for some $b\in \mathscr{D}'_{n-k+2}(M), s\in I_{n-k+1}(M)$. The group
$$\{\mbox{sparks of degree k on M}\}/\sim$$
also denoted by $\widehat{H}^k(M; \C)$, is the Harvey-Lawson presentation of the $k$-th Cheeger-Simons differential character group.

\section{Bott-Chern sparks}
\begin{definition}(Bott-Chern spark)
Let $X$ be a compact complex manifold of dimension $n$.
A Bott-Chern spark of degree $k$ is a triple $$(a, e, r)\in D'_{2n-k+2}(X)\oplus A^{k}(X)\oplus Lip_{2n-k}(X)$$ such that
$$\left\{
  \begin{array}{ll}
    dd^ca=e-r \\
    de=0, \ \ \ d^ce=0
  \end{array}
\right.
$$

Two Bott-Chern sparks of degree $k$ are equivalent $(a_1, e, r_1)\sim (a_2, e, r_2)$  if there exist $b\in \mathscr{D}'_{2n-k+2}(X), s\in I_{2n-k+1}(X)$ such that
$$\left\{
  \begin{array}{ll}
   d^ca_1-d^ca_2=db+s \\
    r_1-r_2=-ds \\
  \end{array}
\right.
$$

The Bott-Chern differential cohomology of degree $k$ is the group
$$\widehat{H}^k_{BC}(X):=(\mbox{Bott-Chern sparks of degree k})/\sim$$
\end{definition}

\begin{proposition}(Natural maps)
Let
$$Z^k_{BC}(X):=\{e\in A^k(X)|(a, e, r) \mbox{ is a Bott-Chern spark of degree } k \mbox{ for some } a, r\}$$
The maps
$$\xymatrix{ \widehat{H}^k_{BC}(X) \ar[r]^{\delta^{BC}_1} \ar[d]^{\delta^{BC}_2} & Z^k_{BC}(X) &\mbox{ are given by }&
[(a, e, r)]  \ar[r]^-{\delta^{BC}_1} \ar[d]^{\delta^{BC}_2} & e\\
H^k_{BC}(X; \Z)& && [r] & }$$

and there is a $3\times 3$-grid:
$$\xymatrix{ & 0 \ar[d] & 0 \ar[d] & 0 \ar[d] &\\
0 \ar[r] &  \widehat{H}^k_{BC, 0}(X) \ar[r] \ar[d]    &  \widehat{H}^k_{BC, \infty}(X)\ar[r] \ar[d] & dd^cA^{k-2}(X) \ar[r] \ar[d]  & 0\\
0\ar[r]& H^k_{BCD}(X) \ar[r] \ar[d] & \widehat{H}^k_{BC}(X) \ar[r]^{\delta^{BC}_1} \ar[d]^{\delta^{BC}_2} & Z^{k}_{BC}(X) \ar[r] \ar[d] & 0\\
0 \ar[r]& ker i^{BC}_* \ar[r] \ar[d] & H^{BC}_{2n-k}(X; \Z) \ar[r]^{i^{BC}_*} \ar[d] & H^k_{BC, I}(X) \ar[r] \ar[d] & 0\\
& 0 & 0 & 0 &\\ }$$
where $H^k_{BCD}(X)=\mbox{ kernel of } \delta^{BC}_1$, $H^k_{BC, I}(X)$ is the image of the homomorphism $i^{BC}_*$ which is the composition of homomorphisms
$H^{BC}_{2n-k}(X; \Z)\rightarrow H^{BC}_{2n-k}(X) \overset{PD^{-1}}{\longrightarrow} H^k_{BC}(X)$, $\widehat{H}^k_{BC, \infty}(X)$ is the subgroup
of $\widehat{H}^k_{BC}(X)$ which can be represented by Bott-Chern sparks of the form $(a, e, 0)$ and
$\widehat{H}^k_{BC, 0}(X)$ is the subgroup of $\widehat{H}^k_{BC}(X)$ which can be represented by Bott-Chern sparks of the form $(a, 0, 0)$.
\end{proposition}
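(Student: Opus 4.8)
The plan is to recognize the whole statement as the Bott--Chern incarnation of the Harvey--Lawson spark-to-grid formalism, with the operator $d$ of the classical spark complex replaced by $dd^c$, and to derive the $3\times 3$ grid from homological algebra once the two structural maps are in place, rather than by checking all nine exactness statements by hand. Here $\delta^{BC}_1$ sends $[(a,e,r)]$ to its smooth part $e$ and $\delta^{BC}_2$ sends it to the class $[r]$ in the integral Bott--Chern homology $H^{BC}_{2n-k}(X;\Z)$. I would first dispose of well-definedness. For $\delta^{BC}_1$ there is nothing to prove, since equivalent sparks carry the same $e$ by definition of $\sim$. For $\delta^{BC}_2$, observe that $dd^c a$ is both $d$- and $d^c$-closed, so $r=e-dd^c a$ is a $d,d^c$-closed integral Lipschitz cycle and $[r]$ is defined; and if $(a_1,e,r_1)\sim(a_2,e,r_2)$ then $r_1-r_2=-ds$ with $s$ integral, which (since $r_1,r_2$ are $d^c$-closed, forcing $d^c(ds)=0$) is exactly the kind of boundary quotiented out in $H^{BC}_{2n-k}(X;\Z)$. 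I would record this description of integral Bott--Chern homology, as $d,d^c$-closed integral cycles modulo the $d$-boundaries $ds$, precisely so that this step is immediate.

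With the maps fixed, the middle row and the bottom row are exact essentially by definition: $H^k_{BCD}(X)$ is defined as $\ker\delta^{BC}_1$ and $Z^k_{BC}(X)$ as the image of $\delta^{BC}_1$, while $\ker i^{BC}_*$ and $H^k_{BC,I}(X)$ are by definition the kernel and image of $i^{BC}_*$. I would then identify every remaining arrow of the grid as a (co)restriction of $\delta^{BC}_1$, $\delta^{BC}_2$, or $i^{BC}_*$, or an inclusion of subgroups, which makes commutativity automatic, and read off the outer columns and the top row as short chases. For the left column, $[(a,0,r)]\mapsto[r]$ lands in $\ker i^{BC}_*$ because $e=0$ makes $r=-dd^c a$ trivial over $\C$; its kernel is the sparks representable as $(a,0,0)$, namely $\widehat H^k_{BC,0}(X)$, and given $[r]\in\ker i^{BC}_*$, writing $r=dd^c u$ as currents produces the preimage $(-u,0,r)$. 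For the right column, $e\mapsto[e]_{BC}$ has image $H^k_{BC,I}(X)$ and kernel the smooth forms that are $dd^c$-exact; for the top row the surjection onto $dd^c A^{k-2}(X)$ is $(a,e,0)\mapsto e=dd^c a$, with kernel again $\widehat H^k_{BC,0}(X)$. Cleanly, the three columns come from one short exact sequence of complexes and the three rows from another, so that the grid is produced by the snake lemma applied to the resulting map of short exact sequences.

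The analytic heart, and the step I expect to be the main obstacle, is a single Bott--Chern regularity lemma: the natural map from Bott--Chern cohomology computed with smooth forms to the version computed with currents is an isomorphism in every bidegree (and similarly for the homological and integral versions relevant to the columns). This is exactly the quasi-isomorphism needed to identify the cohomology of the Bott--Chern spark complex with the named groups $Z^k_{BC}(X)$, $H^k_{BC}(X)$, $H^{BC}_{2n-k}(X;\Z)$, and $dd^c A^{k-2}(X)$, after which the exactness of all rows and columns is produced formally from the underlying map of short exact sequences of complexes --- in particular the delicate identification $\ker\delta^{BC}_2=\widehat H^k_{BC,\infty}(X)$, whose direct verification would otherwise require solving the chain of current equations $dd^c w=ds$, $db=s-d^c w$ and checking that the successive obstructions in Bott--Chern and de Rham homology vanish. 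Concretely the lemma supplies, whenever $r$ is a $d,d^c$-closed integral cycle, a smooth representative $e$ of its Bott--Chern class with $e-r=dd^c a$ for a current $a$ (giving the spark realizing $[r]$ under $\delta^{BC}_2$), and it shows that a smooth $dd^c$-exact form is $dd^c$ of a smooth $(k-2)$-form (identifying the relevant kernels as $dd^c A^{k-2}(X)$). I would derive this regularity lemma, and the accompanying Hodge-type orthogonal decompositions of $A^*(X)$ and $\mathscr D'_*(X)$ sharing a common finite-dimensional space of harmonic representatives, from the ellipticity of the fourth-order Bott--Chern Laplacian on the compact complex manifold $X$. Once it is in place, every corner of the grid is verified and the nine short exact sequences assemble, with commutativity, into the asserted $3\times 3$ grid.
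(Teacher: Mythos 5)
You have correctly isolated the analytic input: the regularity statement (a smooth form that is $dd^c$ of a current is $dd^c$ of a smooth form) is exactly the fact the paper invokes, citing Gillet--Soul\'e, to produce the map $\widehat{H}^k_{BC,\infty}(X)\rightarrow dd^cA^{k-2}(X)$, and deriving it instead from the ellipticity of the fourth-order Bott--Chern Laplacian is legitimate; your treatment of the middle and bottom rows, the top row and the right column is also sound. The gap lies precisely in the two identifications you defer to ``formal'' homological algebra, and it is entangled with a definitional point you dispose of too quickly. The paper never defines $H^{BC}_{2n-k}(X;\Z)$; you decree it to be the $d,d^c$-closed integral cycles modulo integral $d$-boundaries $ds$, so that well-definedness of $\delta^{BC}_2$ is ``immediate''. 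But then the bottom-row map $i^{BC}_*$, which must factor through $H^{BC}_{2n-k}(X)$ whose boundary group is $\mathrm{Im}\,\partial\overline{\partial}$, is no longer well defined: a $d$-boundary $ds$ that is $d^c$-closed need not be $\partial\overline{\partial}$-exact --- that implication is the $dd^c$-lemma, which fails exactly on the non-K\"ahler manifolds this proposition is aimed at. If instead you keep the pattern of the paper's $\Q$-coefficient definition (quotient by $\mathrm{Im}\,\partial\overline{\partial}\cap Lip$), your well-definedness argument for $\delta^{BC}_2$ collapses (your parenthetical reason, $d^c(ds)=0$, only says $ds$ is a cycle, not that it is an admissible boundary), and worse, the left and middle columns are then genuinely inexact: on an Enriques surface a divisor $D$ representing the canonical class is a $d,d^c$-closed integral $2$-cycle which is $\partial\overline{\partial}$-exact as a current (its real class vanishes and the $dd^c$-lemma for currents holds on a K\"ahler surface), yet $D$ bounds no integral current because it is a nonzero torsion class in $H_2(X;\Z)$; the spark $(a,0,D)$ with $dd^ca=-D$ then lies in $\ker\delta^{BC}_2$ but not in $\widehat{H}^2_{BC,0}(X)$.

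The single observation that repairs all of this, and which your proposal never extracts, is that the equivalence relation forces $s=d^c(a_1-a_2)-db$, hence $ds=dd^c(a_1-a_2)$: the boundaries one must quotient by in $H^{BC}_{2n-k}(X;\Z)$ are exactly the $ds$ with $s$ integral and $s+db\in \mathrm{Im}\,d^c$ for some current $b$, and these are automatically $\partial\overline{\partial}$-exact. With that definition both $\delta^{BC}_2$ and $i^{BC}_*$ are well defined, and the two identifications you route through the snake lemma become short direct computations: if $[r]=0$, write $r=ds$ with $s=d^cw-db$; then $(a,e,r)\sim(a+w,e,0)$ (take $s'=-s$, $b'=-b$), which gives $\ker\delta^{BC}_2=\widehat{H}^k_{BC,\infty}(X)$ and, specializing to $e=0$, the exactness of the left column. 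As written, your snake-lemma scaffolding is never established: you do not exhibit $\widehat{H}^k_{BC}(X)$ as the cohomology of a complex sitting in a short exact sequence, and the Harvey--Lawson spark-complex axioms do not apply off the shelf here because the equivalence involves $d^ca_i$ rather than $a_i$ and the gluing operator $dd^c$ is second order. So precisely the delicate steps you intended the machinery to absorb are the ones left unproven, and one of them is outright false under one of the two available readings of your own definitions.
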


We note that the map  $\widehat{H}^k_{BC, \infty}(X)\rightarrow dd^cA^{k-2}(X)$ follows from a fact that if $a$ is a current such that $dd^ca=e$ is
a smooth form, then there is a smooth form
$w$ such that $dd^cw=e$ (see \cite[Thm 1.2.2(i)]{GS}).

\begin{proposition}
\begin{enumerate}
\item Suppose that $X, Y$ are compact complex manifolds of dimension $n$ and $m$ respectively.
If $f:X \rightarrow Y$ is a surjective holomorphic submersion, then $f$ induces a group homomorphism
$$f_*:\widehat{H}^k_{BC}(X) \rightarrow \widehat{H}^{k+2(m-n)}_{BC}(Y)$$
given by
$$f_*[(a, e, r)]=[(f_*a, f_*e, f_*r)]$$

\item
If $X$ is a compact K\"ahler manifold, we have the following commutative diagram:
$$
\begin{xymatrix}{
\widehat{H}^k_{BC}(X) \ar[r] \ar[d] & \widehat{H}^k(X; \C)\ar[d] \\
H^k_{BC}(X) \ar[r]                  &         H^k_{DR}(X; \C)   }
\end{xymatrix}
$$
which is given by
$$
\begin{xymatrix}{
[(a, e, r)] \ar[r] \ar[d] & [(d^ca, e, r)] \ar[d]\\
[e] \ar[r] & [e]}
\end{xymatrix}
$$

\end{enumerate}
\end{proposition}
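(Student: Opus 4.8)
The plan is to handle the two parts separately; in each case the strategy is to push the defining spark equation and the equivalence relation through the relevant operation and to check that every component lands in the space prescribed by the degree shift. For part (1), I would first isolate the three structural facts about the proper pushforward that make the construction go through. Since $X$ is compact, $f$ is proper, so $f_*:\mathscr{D}'_j(X)\to\mathscr{D}'_j(Y)$ is defined, dimension-preserving, and satisfies $f_*d=df_*$. Because $f$ is holomorphic, $f_*$ commutes with $\partial$ and $\overline{\partial}$, hence with $d^c$ and with $dd^c$. Because $f$ is a \emph{submersion}, the current-pushforward of the smooth form $e$ coincides (by the projection formula) with the fiber integral of $e$, which is again smooth; fiber integration lowers form-degree by the real fiber dimension $2(n-m)$, so with $k':=k+2(m-n)$ one gets $f_*e\in A^{k+2(m-n)}(Y)=A^{k'}(Y)$. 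A dimension count then gives $f_*a\in\mathscr{D}'_{2n-k+2}(Y)=\mathscr{D}'_{2m-k'+2}(Y)$ and $f_*r\in Lip_{2n-k}(Y)=Lip_{2m-k'}(Y)$, the latter because the composite of a Lipschitz map with $f$ is Lipschitz. Applying $f_*$ to $dd^ca=e-r$ yields $dd^c(f_*a)=f_*e-f_*r$, and applying it to $de=0=d^ce$ yields $d(f_*e)=0=d^c(f_*e)$, so $(f_*a,f_*e,f_*r)$ is a Bott-Chern spark of degree $k'$ on $Y$.

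Next I would verify that $f_*$ descends to equivalence classes. If $(a_1,e,r_1)\sim(a_2,e,r_2)$ via $(b,s)$, with $d^ca_1-d^ca_2=db+s$ and $r_1-r_2=-ds$, then applying $f_*$ and using $f_*d^c=d^cf_*$ and $f_*d=df_*$ gives $d^c(f_*a_1)-d^c(f_*a_2)=d(f_*b)+f_*s$ and $f_*r_1-f_*r_2=-d(f_*s)$, so $(f_*b,f_*s)$ witnesses the equivalence of the images; here one invokes that $f_*$ preserves integral currents. Linearity of $f_*$ gives the homomorphism property, completing part (1). The main obstacle here is not the bookkeeping but invoking cleanly the two analytic facts: that pushforward of currents commutes with $d^c$ (which is where holomorphicity is essential, and which I would reduce to the commutation of $f^*$ with $\partial,\overline{\partial}$ on forms), and that $f_*e$ is smooth (which is where the submersion hypothesis is essential, via the identification of current-pushforward with fiber integration).

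For part (2), the key observation is that the Bott-Chern equivalence relation was defined precisely so that applying $d^c$ turns a Bott-Chern spark into a Cheeger-Simons spark. Given a Bott-Chern spark $(a,e,r)$ on $X$ (real dimension $2n$), set $\alpha:=d^ca\in\mathscr{D}'_{2n-k+1}(X)$; then $d\alpha=dd^ca=e-r$ is exactly the degree-$k$ spark equation of Harvey-Lawson, and $r\in Lip_{2n-k}(X)$ is an integral current, so $(\alpha,e,r)$ is a genuine Cheeger-Simons spark. The equivalence data transport verbatim: the Bott-Chern relation $d^ca_1-d^ca_2=db+s$, $r_1-r_2=-ds$ is literally the Cheeger-Simons relation for $\alpha_i=d^ca_i$ with the same pair $(b,s)$, so $[(a,e,r)]\mapsto[(d^ca,e,r)]$ is a well-defined homomorphism $\widehat{H}^k_{BC}(X)\to\widehat{H}^k(X;\C)$. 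Finally, since $e$ is unchanged within each equivalence class, the left vertical map is $[(a,e,r)]\mapsto[e]\in H^k_{BC}(X)$ and the right vertical map is $[(\alpha,e,r)]\mapsto[e]\in H^k_{DR}(X;\C)$, so both composites around the square send $[(a,e,r)]$ to the de Rham class $[e]$ and the diagram commutes on the nose. The substance of the Kähler hypothesis lies not in this formal verification but in the bottom arrow: the $\partial\overline{\partial}$-lemma makes $H^k_{BC}(X)\to H^k_{DR}(X;\C)$ an isomorphism, which is what places the square in its intended context, and I would record this as the one point where the Kähler condition is genuinely used.
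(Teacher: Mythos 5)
Your proof is correct, and it is essentially the intended argument: the paper states this proposition without giving any proof, and your verification — pushing the spark equation and the equivalence data through $f_*$ (using properness of $f$, holomorphicity for the commutation of $f_*$ with $d^c$ and $dd^c$, the submersion hypothesis for smoothness of $f_*e$ via fiber integration, and preservation of Lipschitz/integral currents under pushforward), together with the observation that $d^c$ converts a Bott-Chern spark $(a,e,r)$ and its equivalence data verbatim into a Harvey-Lawson spark $(d^ca,e,r)$ with the same witnesses $(b,s)$ — is exactly the formal check the statement requires, with all degree bookkeeping done correctly. Your closing remark is also accurate: the K\"ahler hypothesis is not needed for well-definedness or commutativity of the square, but only to make the bottom arrow $H^k_{BC}(X)\rightarrow H^k_{DR}(X;\C)$ an isomorphism via the $\partial\overline{\partial}$-lemma.
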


\section{Atomic sections and refined Bott-Chern classes}
To construct refined Bott-Chern classes in Bott-Chern differential cohomology for holomorphic vector bundles, we apply the atomic section theory of Harvey and Lawson.
We first give a review of some basic properties of atomic sections.

\begin{definition}(Atomic sections)
Let $M$ be a connected smooth manifold. A function $f:M \rightarrow \mathbb{R}^n$ is atomic if
$f$ is not identically zero and for each $p$-form $\frac{dy^I}{|y|^p}$ on $\R^n$ where
$I=(i_1, ..., i_k)$, $p=|I|:=\sum^k_{j=1}i_j\leq n-1$, the pullback
$$f^*(\frac{dy^I}{|y|^{|I|}})$$
to $M$ has an $L^1_{loc}$-extension across the zero set
$$Z:=\{x\in M: f(x)=0\}$$
If $f$ is atomic, the zero divisor of $f$ is the current
$$\mbox{Div}(f):=d(f^*\Theta)$$
where
$$\Theta=\frac{1}{c_k}\sum^k_{i=1}(-1)^{i-1}\frac{y_i}{|y|^k}dy_1\wedge \cdots \wedge \widehat{dy_i}\wedge \cdots \wedge dy_k$$
and $c_k=vol(S^{k-1})$ is the volume of the unit sphere in $\R^k$.
\end{definition}

It was proved by Harvey and Semmes (see \cite{HS92}) that the zero set of an atomic function has Hausdorff measure 0.
Let $E$ be a smooth oriented real vector bundle of rank $n$ over $M$. If $e=(e_1, ..., e_n)$ is a smooth local frame for $E$,
then each smooth section $s$ of $E$ determines a smooth $\R^n$-valued function $u=(u_1, ..., u_n)$ such that on this trivialization
$$s=\sum^n_{k=1}u_ke_k$$
We say that $s$ is atomic if for each choice of local frame, the corresponding function $u$ is an atomic function. The zero divisor of an atomic section
is locally defined to be $Div(u)$.

Some result of atomic sections and their divisors are given in the following (see \cite{HS92, HL93}).
\begin{proposition}
1. A holomorphic section $s$ is atomic if and only if $s$ is not identically zero.

2. Generic sections are atomic.

3. If $s$ is an atomic section,
$$Div(s)=\sum^{\infty}_{j=1}n_j[Z_j]  $$
where $\{[Z_j]\}_{j=1}^{\infty}$ is the family of integral currents associated to the connected components of $\mbox{Reg } Z$
where $Z$ is the zero locus of $s$.

4. For two atomic sections $\mu, \nu$ of a bundle $E$,
$$Div(\mu)-Div(\nu)=dR$$
for some locally rectifiable current $R$. Hence the divisors of $\mu$ and $\nu$ define a same integral cohomology class.
\end{proposition}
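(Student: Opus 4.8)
The plan is to treat the four assertions in turn, with (1) carrying the analytic weight and (2)--(4) following from the structure of the current $f^*\Theta$. I first record that atomicity is frame-independent: a change of local frame replaces the representative $u$ by $g\cdot u$ for a smooth, invertible, matrix-valued $g$, which preserves the zero set and preserves the $L^1_{loc}$ class of each pulled-back form $dy^I/|y|^{|I|}$ up to a bounded smooth factor; hence it suffices to verify the defining condition in one frame. For (1) the forward implication is immediate, since ``atomic'' already demands $u\not\equiv 0$. For the converse I take a holomorphic representative $u:U\to\C^r\cong\R^n$ ($n=2r$) with $u\not\equiv 0$ and must show $u^*(dy^I/|y|^{|I|})\in L^1_{loc}$ for every $|I|\leq n-1$. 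The point is that $1/|y|^{p}$ is integrable near $0\in\R^n$ exactly when $p\leq n-1$, so the target singularities are borderline integrable and the only danger is that $u$ degrades integrability across $Z=\{u=0\}$. I would rule this out via a Hironaka resolution $\pi:\widetilde U\to U$, after which $u\circ\pi$ is a monomial times a unit in suitable coordinates; the pulled-back forms then become explicit, their singularities are absorbed by the Jacobian of $\pi$, and integrability reduces to a one-variable check. (Alternatively the \L ojasiewicz inequality $|u(x)|\geq c\,\mathrm{dist}(x,Z)^N$ combined with the coarea formula gives the same bound.)

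For (2) I would invoke transversality: a generic smooth section meets the zero section transversally, so $Z$ is a smooth submanifold of real codimension $n$ with $Z=\mathrm{Reg}\,Z$; near such a nondegenerate zero $u$ is a submersion onto $\R^n$ and the required $L^1_{loc}$ bound for $u^*(dy^I/|y|^{|I|})$ is a direct change of variables, so $u$ is atomic. For (3) the key fact is that $\Theta$ is closed on $\R^n\setminus\{0\}$ and restricts to the normalized volume form of $S^{n-1}$; hence $f^*\Theta$ is smooth and closed off $Z$, and $\mathrm{Div}(f)=d(f^*\Theta)$ is a flat current supported in $Z$. Federer's flat-chain support theorem annihilates any part sitting on the strictly lower-dimensional set $\mathrm{Sing}\,Z$, so $\mathrm{Div}(f)$ is carried by the smooth locus $\mathrm{Reg}\,Z=\bigcup_j Z_j$; there closedness forces a locally constant combination $\sum_j n_j[Z_j]$, and each $n_j$ is computed as the integral of $f^*\Theta$ over a small linking $(n-1)$-sphere, i.e.\ the degree of $f/|f|:S^{n-1}\to S^{n-1}$, hence an integer.

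For (4), simply writing $\mathrm{Div}(\mu)-\mathrm{Div}(\nu)=d(\mu^*\Theta-\nu^*\Theta)$ only produces an $L^1_{loc}$, i.e.\ flat, primitive, whereas the assertion demands a \emph{rectifiable} $R$. I would therefore build $R$ geometrically. On $M\times[0,1]$ with projection $\pi$, pull back $E$ and form the interpolating section $\Sigma(x,t)=(1-\chi(t))\,\nu(x)+\chi(t)\,\mu(x)$ for a cutoff $\chi$ equal to $0$ near $t=0$ and to $1$ near $t=1$, perturbing if necessary so that $\Sigma$ is atomic. Then $\mathrm{Div}(\Sigma)$ is a $d$-closed integral current agreeing with $\mathrm{Div}(\nu)$ near $t=0$ and with $\mathrm{Div}(\mu)$ near $t=1$; setting $R:=\pi_*\mathrm{Div}(\Sigma)$ (with $\mathrm{Div}(\Sigma)$ restricted to the slab $0\le t\le 1$), which is locally rectifiable because $\pi$ is proper on the compact $[0,1]$-factor, the homotopy relation $d\,\pi_*=\pi_*\,d$ together with the two end slices gives $\mathrm{Div}(\mu)-\mathrm{Div}(\nu)=dR$. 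In particular the two divisors are integral cycles of codimension $n$ cobounding $R$, so they define the same class in $H^n(M;\Z)$.

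The main obstacle is precisely the converse in (1): the integrability of $u^*(dy^I/|y|^{|I|})$ across the full, possibly singular, analytic set $Z$. Outside the transverse case this genuinely requires resolution of singularities or sharp \L ojasiewicz estimates, and it is here that the holomorphy of $u$ is used essentially; once this integrability and the closedness of $\Theta$ on $\R^n\setminus\{0\}$ are available, assertions (2)--(4) are comparatively formal.
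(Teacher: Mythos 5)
The paper itself does not prove this proposition: it is quoted from Harvey--Semmes \cite{HS92} and Harvey--Lawson \cite{HL93}, so your attempt must be measured against those arguments rather than against anything in the text. Your overall strategy coincides with theirs in outline --- resolution of singularities for (1), transversality for (2), the flat support theorem for (3), a cobordism of divisors for (4) --- and parts (1) and (2) are essentially right: Hironaka monomialization is the standard way to prove that a nonzero holomorphic section is atomic, and Thom transversality plus the change-of-variables computation handles generic sections. Your parenthetical alternative for (1), however, is wrong: a {\L}ojasiewicz bound $|u(x)|\geq c\,\mathrm{dist}(x,Z)^N$ only gives $|u|^{-p}\leq C\,\mathrm{dist}(\cdot,Z)^{-Np}$, which is not locally integrable once $Np$ exceeds the codimension of $Z$; already $u(z)=z^{2}$ on $\C$ defeats this crude bound (there $|du_1|/|u|\sim 1/|z|$ is integrable only because the numerator vanishes along with $u$). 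The cancellation coming from holomorphy is essential, and capturing it is exactly what the resolution argument does.

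The genuine gaps are in (3) and (4). In (3) you apply the flat support theorem as if $Z$ carried an analytic-type structure: $\mathrm{Reg}\,Z$ a codimension-$n$ submanifold of locally finite $\mathcal{H}^{m-n}$ measure, $\mathrm{Sing}\,Z$ of strictly smaller dimension, and $\sum_j n_j[Z_j]$ a current of locally finite mass. For an arbitrary atomic section none of this is available a priori --- atomicity only gives you that $\mathrm{Div}(f)=d(f^*\Theta)$ is a flat current supported in $Z$, and $Z$ for a merely atomic $f$ need not have any such stratification. Establishing the structure of $\mathrm{Div}(f)$ without assuming it (that it is locally integrally flat, hence rectifiable with locally constant integer multiplicities where $Z$ is regular) is precisely the hard content of \cite{HS92}; your argument proves (3) only under additional regularity hypotheses on $Z$, e.g.\ in the holomorphic or transversal cases. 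In (4), the step ``perturbing if necessary so that $\Sigma$ is atomic'' is load-bearing and fails as stated: the perturbation must vanish near $t=0,1$ so that the end slices remain $\mathrm{Div}(\nu)$ and $\mathrm{Div}(\mu)$, but relative transversality cannot be achieved when the boundary data are only atomic rather than transversal, and in the transition region where the perturbation switches on neither your transversality argument nor the pullback-atomicity argument applies (note the unperturbed interpolation can even vanish identically on a slice, e.g.\ when $\mu=-\nu$). Harvey--Lawson avoid this by working with the projectivized family over $X\times\P^1$ and building the atomicity of the family into the hypotheses --- this is the ``$k$-atomic'' condition the paper introduces immediately after this proposition, and the two-section comparison is \cite[Theorem 7.6]{HZ01}, which the paper invokes later --- together with the structure theorem from (3) to know the cobounding current is rectifiable. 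So (4) requires either that hypothesis or a genuinely different construction, not a genericity wave.
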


\begin{definition}(Atomic bundle map)
If $\alpha:E \rightarrow F$ is a bundle map between line bundle $E$ and vector bundle $F$, we say that $\alpha$ is atomic if $\alpha$ is an atomic section of $\mbox{ Hom}(E, F)$.
\end{definition}

Now let us consider the case with $k$ sections. Let $F$ be a complex vector bundle of rank $n$ over $X$.
Giving sections $\mu_0, ..., \mu_k$ of $F$ is equivalent to giving a bundle map $\alpha:\underline{\C}^{k+1} \rightarrow F$ where
$\underline{\C}^{k+1}$ is the trivial bundle over $X$ and the bundle map is defined by
$$\alpha(x, e_j):=\mu_j(x)$$
Let $\underline{\P}^k$ be the projectivization of $\underline{\C}^{k+1}$
and $\mathbb{U}$ be the tautological line bundle over $\underline{\P}^k$.

Then pullback by $\pi$, we get a diagram
$$\xymatrix{ \mathbb{U}\subset \pi^*\underline{\C}^{k+1} \ar[rr]^-{\widetilde{\alpha}} \ar[rd]&                                &\pi^*F   \ar[ld] &\underline{\C}^{k+1} \ar[rr]^-{\alpha} \ar[rd] &   &F \ar[ld]\\
                                                                                    & \underline{\P}^k \ar[rrr]^{\pi} &                &                                               & X &}
$$

\begin{definition}
We say that a family of sections $\{\mu_0, ..., \mu_k\}$ of a vector bundle $F$ is atomic if the associated bundle map $\widetilde{\alpha}$ is atomic. In this case, we define
the divisor of this family of sections to be
$$\mathbb{D}_k(\alpha):=\pi_*Div(\widetilde{\alpha})$$
\end{definition}

The following important result by Harvey and Lawson can be found in \cite[VI. Thm 1.5]{HL93}.
\begin{theorem}
Let $\alpha$ be a family of $k+1$ cross-sections which is $k$-atomic of a complex vector bundle $F$ of rank $n$ over a complex manifold $X$. Then there is a $L^1_{loc}$-current $\sigma$
on $X$, canonically defined for each choice of hermitian metric $h$ on $F$, such that
$$c_{n-k}(F, h)-\mathbb{D}_k(\alpha)=d\sigma$$
\end{theorem}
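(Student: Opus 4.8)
The plan is to exploit the projectivization $\pi:\underline{\P}^k \rightarrow X$ to trade the intermediate Chern form $c_{n-k}$ and the family of $k+1$ sections for a \emph{single}-section (top Chern form) statement upstairs, which can then be pushed back down to $X$. On $\underline{\P}^k$ consider the rank-$n$ bundle $W:=\mathbb{U}^*\otimes \pi^*F$. The atomic bundle map $\widetilde{\alpha}:\mathbb{U}\rightarrow \pi^*F$ is exactly an atomic section of $W$, and by definition $\mathbb{D}_k(\alpha)=\pi_*\mbox{Div}(\widetilde{\alpha})$. So the whole point of the construction is that $\mbox{Div}(\widetilde{\alpha})$ carries information about the \emph{top} Chern class $c_n(W)$, whose fiber integral will recover $c_{n-k}(F)$.

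First I would equip $W$ with the tensor-product connection $\nabla$ induced by the Chern connection of $(F,h)$ and the canonical Fubini--Study metric on the tautological bundle $\mathbb{U}$; this connection depends only on $h$, which is what will make the resulting current canonical. Applying the Harvey--Lawson generalized (singular) Poincar\'e--Lelong formula for atomic sections of a rank-$n$ bundle (\cite{HL93}) to $\widetilde{\alpha}$ produces a canonically defined $L^1_{loc}$ transgression current $T$ on $\underline{\P}^k$ with
$$c_n(W, \nabla)-\mbox{Div}(\widetilde{\alpha})=dT$$

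Next I would integrate over the fibers. Since the fiber $\P^k$ is compact without boundary, fiber integration $\pi_*$ commutes with $d$, so pushing the above equation down yields $\pi_*c_n(W,\nabla)-\mathbb{D}_k(\alpha)=d(\pi_*T)$. The substantive step is to identify $\pi_*c_n(W,\nabla)$ \emph{at the level of forms}. For the tensor-product connection the splitting principle gives the exact polynomial identity in curvatures
$$c_n(W, \nabla)=\prod_{i=1}^n(\xi+a_i)=\sum_{j=0}^n \xi^{n-j}\wedge \pi^*c_j(F, h)$$
where $\xi=c_1(\mathbb{U}^*,\nabla)$ restricts on each fiber to the normalized Fubini--Study form and the $a_i$ are the Chern-root forms of $\pi^*F$. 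Because $\underline{\P}^k\rightarrow X$ is the trivial $\P^k$-bundle, $\int_{\P^k}\xi^m$ vanishes unless $m=k$ and equals $1$ for $m=k$; hence only the term $j=n-k$ survives and $\pi_*c_n(W,\nabla)=c_{n-k}(F,h)$ on the nose.

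Setting $\sigma:=\pi_*T$, which is $L^1_{loc}$ as the fiber integral of an $L^1_{loc}$ current along a proper submersion and is canonical because $T$ and $\nabla$ are, we obtain $c_{n-k}(F,h)-\mathbb{D}_k(\alpha)=d\sigma$. I expect the main obstacle to be precisely the form-level fiber-integration identity — arranging the connection on $W$ so that $\pi_*c_n(W,\nabla)$ equals $c_{n-k}(F,h)$ exactly rather than only in cohomology — together with verifying the regularity (the $L^1_{loc}$ property) of $\pi_*T$ across the singular set of $\widetilde{\alpha}$, where the transgression current is unbounded; controlling this requires the Hausdorff-measure smallness of the atomic zero set noted earlier in the paper.
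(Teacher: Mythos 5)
The paper offers no proof of this statement to compare against: it is quoted directly from Harvey--Lawson, with the citation \cite[VI.\ Thm 1.5]{HL93}, and the authors use it as a black box to define the refined classes. So your proposal can only be judged on its own merits and against the original Harvey--Lawson argument --- and on both counts it holds up: reducing the family case to the single-section (top Chern current) case on the projectivization, via $\widetilde{\alpha}$ viewed as an atomic section of $\mathbb{U}^*\otimes\pi^*F$, and then pushing forward, is exactly how the rank-$(k+1)$ case is handled in \cite{HL93}, where the divisor $\mathbb{D}_k(\alpha)=\pi_*\mbox{Div}(\widetilde{\alpha})$ is set up precisely so that this reduction works. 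Two small repairs to your write-up. First, the identity
$$c_n(\mathbb{U}^*\otimes\pi^*F,\nabla)=\sum_{j=0}^n \xi^{n-j}\wedge \pi^*c_j(F,h)$$
should not be attributed to the splitting principle, which only yields cohomological conclusions; it is an exact Chern--Weil identity because the curvature of the tensor-product connection is $\pi^*\Omega_F+\Omega_{\mathbb{U}^*}\cdot I$ with $\Omega_{\mathbb{U}^*}$ a scalar-valued $2$-form, so the linear-algebra expansion $\det(A+xI)=\sum_j x^{n-j}e_j(A)$ (valid over the commutative ring of even forms) applies verbatim. Second, you do not need any argument that $\pi_*$ commutes with $d$ or any care about boundary terms: for currents, $\pi_*$ is defined by duality, $(\pi_*T)(\phi)=T(\pi^*\phi)$, so $d\pi_*=\pi_*d$ holds identically for the proper map $\pi$; the genuinely needed facts are only that $\pi_*(\xi^m)$ vanishes for $m\neq k$ and equals $1$ for $m=k$, and that the fiber integral of an $L^1_{loc}$ form along the compact fibers is $L^1_{loc}$ by Fubini, both of which you state correctly. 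The one structural caveat is that your argument consumes the single-section theorem (the $k=0$ case, which is the analytic heart of \cite{HL93} and of the atomic theory of \cite{HS92}) as an input, so it is a reduction rather than a from-scratch proof --- but that is also true of the original.
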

Recall that the $(n-k)$-th refined Chern class of $F$ with $h$ is
$$\widehat{c}_{n-k}(F, h):=[(\sigma, c_{n-k}(F, h), \mathbb{D}_k(\alpha))]\in \widehat{H}^k(X; \C)$$

When $F$ is a holomorphic vector bundle there is a $dd^c$-refinement of above result (see \cite[Remark 4.18]{HL95}).

\begin{theorem}($dd^c$-refinement)
Suppose that $F$ is a holomorphic vector bundle of rank $n$ over a complex manifold $X$
and $\alpha$ is a family of $k+1$ holomorphic cross-sections of $F$ which is $k$-atomic. Then there is a $L^1_{loc}$-current $\eta$
on $X$, canonically defined for each choice of hermitian metric $h$ on $F$, such that
$$c_{n-k}(F,h)-\mathbb{D}_k(\alpha)=dd^c\eta$$
\end{theorem}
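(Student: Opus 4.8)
The plan is to follow the proof of the preceding ($d$-)transgression theorem verbatim, but to replace the universal $d$-transgression of the top Chern form by its genuinely holomorphic $dd^c$-refinement, which is available because all the data are holomorphic. Concretely, I would reduce everything to the top Chern class of a single holomorphic atomic section via the projectivization already constructed above, prove (or invoke) a $dd^c$-transgression for that top Chern class, and then integrate over the fibre of $\pi\colon \underline{\P}^k \to X$.

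First I would transfer the problem to $\underline{\P}^k$ and the rank-$n$ holomorphic bundle $L := \mathbb{U}^{*}\otimes \pi^{*}F$. Since $\alpha$ is $k$-atomic, the induced bundle map $\widetilde{\alpha}$ is a section of $L$ that is holomorphic and not identically zero, hence atomic by part (1) of the Proposition on atomic sections; its zero divisor is $\mathrm{Div}(\widetilde{\alpha})=[Z]$, and by the definition of $\mathbb{D}_k$ we have $\pi_{*}[Z]=\mathbb{D}_k(\alpha)$. I would equip $L$ with the metric $h_L$ induced by the standard metric on $\mathbb{U}\subset \pi^{*}\underline{\C}^{k+1}$ and by $\pi^{*}h$.

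The heart of the matter is the holomorphic transgression for the top Chern class: for the Hermitian holomorphic bundle $(L,h_L)$ of rank $n$ with the holomorphic atomic section $\widetilde{\alpha}$, there is an $L^{1}_{loc}$-current $\widetilde{\eta}$ on $\underline{\P}^k$, canonically determined by $h_L$, such that
$$c_n(L,h_L)-[Z]=dd^{c}\widetilde{\eta}.$$
This is exactly the holomorphic refinement of the singular-connection transgression of Harvey--Lawson (\cite{HL95}, Remark 4.18) specialized to the top Chern form; its content is the construction of the canonical $L^{1}_{loc}$ potential as the pullback of a universal $dd^{c}$-primitive (a generalized Poincar\'e--Lelong potential, built from $\log|\widetilde{\alpha}|_{h_L}$ together with the curvature of $L$), the verification that this pullback is locally integrable across $Z$, and the proof that the resulting current identity carries no spurious residue beyond $[Z]$ itself. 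This last point, where atomicity enters decisively (the zero set is negligible and $\mathrm{Reg}\,Z$ carries precisely the integral current $[Z]$), is what I expect to be the main obstacle.

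Granting the displayed identity, I would conclude by pushing it forward along $\pi$. Because $\pi$ is a holomorphic fibre bundle with compact fibre $\P^{k}$, $\pi_{*}$ commutes with $\partial$ and $\overline{\partial}$, hence with $dd^{c}$, so $\pi_{*}(dd^{c}\widetilde{\eta})=dd^{c}\pi_{*}\widetilde{\eta}$. It then remains to compute $\pi_{*}c_n(L,h_L)$ at the level of Chern--Weil forms. Writing $\omega:=c_1(\mathbb{U}^{*},h_L)$ and using that $\underline{\P}^k=X\times\P^{k}$ with $\mathbb{U}$ pulled back from $\P^{k}$, so that $\omega$ is the pullback of the Fubini--Study form, the curvature identity for $\mathbb{U}^{*}\otimes\pi^{*}F$ gives the form-level splitting $c_n(L,h_L)=\sum_{i=0}^{n}\omega^{\,n-i}\wedge \pi^{*}c_i(F,h)$. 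Integrating over the fibre, $\pi_{*}(\omega^{\,m})=0$ for $m\neq k$ and $\pi_{*}(\omega^{\,k})=1$, so only the term $i=n-k$ survives and $\pi_{*}c_n(L,h_L)=c_{n-k}(F,h)$ exactly, with no error term. Setting $\eta:=\pi_{*}\widetilde{\eta}$, which is again $L^{1}_{loc}$ by fibre integration and depends canonically on $h$, and combining with $\pi_{*}[Z]=\mathbb{D}_k(\alpha)$, I obtain $c_{n-k}(F,h)-\mathbb{D}_k(\alpha)=dd^{c}\eta$, as required.
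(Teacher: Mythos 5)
First, a point of reference: the paper does not prove this theorem at all --- it is imported verbatim from Harvey--Lawson (\cite[Remark 4.18]{HL95}), so there is no internal argument to compare yours against. Your reduction scheme itself is sound, and in fact mirrors how Harvey--Lawson's theory is organized: $\widetilde{\alpha}$ is a holomorphic section of $L=\mathbb{U}^*\otimes\pi^*F$, atomic by the very definition of $k$-atomicity (you do not need to re-derive this from part (1) of the proposition); the form-level splitting $c_n(L,h_L)=\sum_{i}\omega^{n-i}\wedge\pi^*c_i(F,h)$ is exact for the tensor-product Chern connection, since $\det(A+\lambda I)=\sum_i\lambda^{n-i}\sigma_i(A)$ holds when the entries are commuting even-degree forms; $\omega^m=0$ for $m>k$ because $\omega$ is pulled back from $\P^k$, while $\pi_*(\omega^k)=1$ in the standard normalization; and pushforward along the proper holomorphic submersion $\pi$ commutes with $\partial$ and $\overline{\partial}$, hence with $dd^c$, and preserves $L^1_{loc}$ by Fubini. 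So the passage from a single-section statement on $\underline{\P}^k$ to the family statement on $X$ is correct (modulo the cosmetic point that $\mathrm{Div}(\widetilde{\alpha})$ carries multiplicities, so it is $\sum_j n_j[Z_j]$ rather than $[Z]$).

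The genuine gap is that the statement you push forward --- existence of a canonical $L^1_{loc}$ current $\widetilde{\eta}$ with $c_n(L,h_L)-\mathrm{Div}(\widetilde{\alpha})=dd^c\widetilde{\eta}$ --- is not an auxiliary lemma: it is precisely the $k=0$ instance of the theorem being proved, and it carries the entire analytic content (construction of the Bott--Chern/Poincar\'e--Lelong potential from $\log|\widetilde{\alpha}|_{h_L}$ and the curvature of $h_L$, local integrability across the zero set, and the absence of spurious residues, which is exactly where atomicity is needed). You flag this yourself as ``the main obstacle'' and propose to ``prove (or invoke)'' it, but you do neither: as a self-contained proof the proposal is therefore a reduction of the family case to an unproven special case of the same theorem. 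If you instead invoke that special case from the literature --- Bott--Chern \cite{BC} for sections with nondegenerate zeros, or Harvey--Lawson \cite{HL93, HL95} for atomic sections --- then your argument becomes a correct and more informative derivation of the family statement from the single-section one; but its epistemic status is then the same as the paper's, which simply cites \cite[Remark 4.18]{HL95} for the full statement. To turn your outline into an actual proof, the work that must be supplied is exactly the construction and analysis of $\widetilde{\eta}$.
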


\begin{definition}(Refined Bott-Chern classes)
Given a holomorphic vector bundle $F$ of rank $n$ with hermitian metric $h$ over a compact complex manifold $X$ and a family $\alpha$ of $(k+1)$-holomorphic cross-sections which is $k$-atomic,
we define
$$\widehat{bc}_{n-k}(F, h):=[(\eta, c_{n-k}(F, h), \mathbb{D}_k(\alpha)]\in \widehat{H}^{2(n-k)}_{BC}(X)$$
which is called the $(n-k)$-th refined Bott-Chern class of $F$.
\end{definition}

We need to show that this definition is independent of the choices of $h$ and $\alpha$. We use notations as in the definition above.

\begin{proposition}
Suppose that $h, h'$ are hermitian metrics on $F$, $\alpha$ and $\alpha'$ are two families of $(k+1)$-holomorphic cross-sections which are $k$-atomic, then

$$c_{n-k}(F, h)-\mathbb{D}_k(\alpha)=dd^c\eta \mbox{ \ \ and \ \ }
c_{n-k}(F, h')-\mathbb{D}_k(\alpha')=dd^c\eta'$$
Furthermore,
$$d^c\eta-d^c\eta'=dL+R$$ where $R$ is rectifiable
and
$$\mathbb{D}_k(\alpha)-\mathbb{D}_k(\alpha')=-dR$$

\end{proposition}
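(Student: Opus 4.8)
The plan is to produce simultaneously the two equations involving $\eta,\eta'$ and then extract from their difference a primitive current $L$ and a rectifiable current $R$ witnessing the stated relations. The first two displayed equations are immediate: each is precisely the conclusion of the $dd^c$-refinement theorem applied to the pair $(F,h,\alpha)$ and $(F,h',\alpha')$ respectively, since each $\alpha,\alpha'$ is a $k$-atomic family of $k+1$ holomorphic cross-sections, and $\eta,\eta'$ are the $L^1_{loc}$-currents canonically produced by that theorem. So the content lies entirely in establishing the relation $d^c\eta-d^c\eta'=dL+R$ together with the compatibility $\mathbb{D}_k(\alpha)-\mathbb{D}_k(\alpha')=-dR$.

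First I would subtract the two equations to get $dd^c(\eta-\eta')=\bigl(c_{n-k}(F,h)-c_{n-k}(F,h')\bigr)-\bigl(\mathbb{D}_k(\alpha)-\mathbb{D}_k(\alpha')\bigr)$. The two Chern forms $c_{n-k}(F,h)$ and $c_{n-k}(F,h')$ differ, by the Bott-Chern transgression recalled in the introduction, by a term of the form $dd^c w$ for a smooth form $w$; so that part of the right-hand side is itself $dd^c$-exact and can be absorbed. For the divisor term, the divisors $\mathbb{D}_k(\alpha)=\pi_*\mathrm{Div}(\widetilde\alpha)$ and $\mathbb{D}_k(\alpha')=\pi_*\mathrm{Div}(\widetilde\alpha')$ arise from two atomic sections of the same bundle $\pi^*\mathrm{Hom}(\mathbb{U},\pi^*F)$; by Proposition (part 4) on atomic sections, the divisors of two atomic sections of one bundle differ by $d$ of a locally rectifiable current. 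Pushing forward under $\pi$ and setting $R$ equal to (minus) that pushed-forward rectifiable current gives exactly the compatibility relation $\mathbb{D}_k(\alpha)-\mathbb{D}_k(\alpha')=-dR$ with $R$ rectifiable, which is one of the two claims.

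With these two observations in hand, the subtracted equation reads $dd^c(\eta-\eta')=dd^c w+dR$ after substituting $\mathbb{D}_k(\alpha)-\mathbb{D}_k(\alpha')=-dR$. Writing this as $d\bigl(d^c(\eta-\eta')-d^c w\bigr)=dR$, I would conclude that $d^c(\eta-\eta')-d^c w - R$ is $d$-closed and, by a $\partial\bar\partial$- (equivalently $dd^c$-) Poincaré type argument on the compact manifold, can be written as $dL_0$ for some current $L_0$; absorbing the smooth contribution $d^c w$ into $L$ then yields $d^c\eta-d^c\eta'=dL+R$ for a suitable current $L$, which is the remaining claim.

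\textbf{The main obstacle} I anticipate is the bookkeeping in this last step: one must verify that the rectifiable current $R$ obtained from the atomic-section comparison is the \emph{same} $R$ that appears in both displayed conclusions, i.e.\ that the choice forced by $\mathbb{D}_k(\alpha)-\mathbb{D}_k(\alpha')=-dR$ is consistent with solving $d^c\eta-d^c\eta'=dL+R$ for $L$. This requires tracking the canonical dependence of $\eta,\eta'$ on $h,h'$ and $\alpha,\alpha'$ through the Harvey-Lawson construction, and checking that the transgression between the two hermitian metrics does not introduce an extra rectifiable (as opposed to $d$-exact) contribution. The regularity of $L$ needs no more than that it be a current, since only the current-level identity is asserted, so the delicate point is genuinely the matching of $R$ rather than any analytic estimate.
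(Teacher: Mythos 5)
Your derivation of the divisor relation is sound: $\widetilde\alpha$ and $\widetilde\alpha'$ are two atomic sections of the same bundle $\mathrm{Hom}(\mathbb{U},\pi^*F)$ over $\underline{\P}^k$, so part 4 of the proposition on atomic sections gives $\mathrm{Div}(\widetilde\alpha)-\mathrm{Div}(\widetilde\alpha')=dR_0$ with $R_0$ locally rectifiable, and pushing forward by the proper map $\pi$ yields $\mathbb{D}_k(\alpha)-\mathbb{D}_k(\alpha')=-dR$ with $R=-\pi_*R_0$ rectifiable. Note, though, that the paper does none of this explicitly: its entire proof is the citation of \cite[Theorem 7.6]{HZ01}, where Harvey and Zweck construct $L$ and $R$ simultaneously by an explicit transgression argument, so that the compatibility between the two displayed identities is built into the construction rather than deduced afterwards by cohomological bookkeeping.

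The part of your argument that produces $L$ has two genuine gaps. First, from $d\bigl(d^c(\eta-\eta')-d^cw-R\bigr)=0$ you conclude exactness ``by a Poincar\'e type argument on the compact manifold.'' The Poincar\'e and $dd^c$ lemmas are local; globally, a $d$-closed current on a compact manifold is $d$-exact only if its de Rham homology class vanishes, and you give no argument that this class vanishes. Controlling precisely that class is the substantive content of the Harvey--Zweck theorem; it cannot be obtained by degree-chasing. Second, the final ``absorption'' step is impossible: you need $dL=dL_0+d^cw$, i.e.\ $d^cw\in \mathrm{Im}\, d$, but $d(d^cw)=c_{n-k}(F,h)-c_{n-k}(F,h')$, which is a nonzero form for a general pair of metrics, so $d^cw$ is not even $d$-closed, let alone $d$-exact. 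Carried out honestly, your computation gives $d^c\eta-d^c\eta'=dL_0+R+d^cw$ with an irremovable smooth term. Indeed, applying $d$ to the claimed identity $d^c\eta-d^c\eta'=dL+R$ and comparing with the two spark equations forces $c_{n-k}(F,h)=c_{n-k}(F,h')$ as forms, so the obstruction you ran into is real: it is exactly the metric-transgression term that the cited Harvey--Zweck construction keeps track of, and it is the reason a purely formal subtraction argument cannot substitute for their theorem.
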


\begin{proof}
This follows from \cite[Theorem 7.6]{HZ01}.
\end{proof}

\begin{theorem}
Suppose that $F$ is a holomorphic vector bundle of rank $n$ over a compact K\"ahler manifold $X$.
The natural transformations in the following diagram
$$\xymatrix{ \widehat{H}^{2k}_{BC}(X) \ar[r] \ar[d] & \widehat{H}^{2k}(X; \C) \ar[d]\\
H^{2k}_{BC}(X) \ar[r] & H^{2k}_{DR}(X; \C)}$$
transform the refined Bott-Chern class $\widehat{bc}_{k}(F, h)$ to Chern class, refined Chern class and Bott-Chern class:
$$\xymatrix{ \widehat{bc}_{k}(F, h) \ar[r] \ar[d] & \widehat{c}_{k}(F, h) \ar[d]\\
bc_{k}(F) \ar[r] & c_{k}(F)}$$
\end{theorem}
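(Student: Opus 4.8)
The plan is to treat the assertion as a diagram chase on a single explicit representative, since the square of natural maps has already been constructed for compact K\"ahler $X$ in the earlier Proposition; in particular its commutativity is given and need not be re-established. What remains is to evaluate each of the three arrows on the representative $(\eta, c_k(F,h), \mathbb{D}(\alpha))$ furnished by the $dd^c$-refinement theorem, where $\alpha$ is the chosen atomic holomorphic family and $dd^c\eta = c_k(F,h) - \mathbb{D}(\alpha)$. First I would record the explicit formulas for the three maps: the horizontal map $\widehat{H}^{2k}_{BC}(X) \to \widehat{H}^{2k}(X;\C)$ is $[(a,e,r)] \mapsto [(d^c a, e, r)]$, the vertical map $\widehat{H}^{2k}_{BC}(X) \to H^{2k}_{BC}(X)$ is $[(a,e,r)] \mapsto [e]$, and the bottom map $H^{2k}_{BC}(X) \to H^{2k}_{DR}(X;\C)$ is $[e]_{BC} \mapsto [e]_{DR}$.

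Two of the three corners are then immediate. Applying the vertical map gives $\widehat{bc}_k(F,h) \mapsto [c_k(F,h)]_{BC}$, which is by definition the Bott-Chern class $bc_k(F)$; the metric independence of this class is Bott and Chern's original observation recalled above. Composing further into de Rham cohomology sends it to $[c_k(F,h)]_{DR}$, which is the ordinary Chern class $c_k(F)$ by Chern--Weil. Since the same de Rham corner is reached along the other route (the characteristic-form map applied to the refined Chern class also returns $[c_k(F,h)]_{DR}$), commutativity at the bottom-right corner is consistent.

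The substance of the theorem is the top-right corner. The horizontal map sends $\widehat{bc}_k(F,h) = [(\eta, c_k(F,h), \mathbb{D}(\alpha))]$ to $[(d^c\eta, c_k(F,h), \mathbb{D}(\alpha))]$, which is a genuine Cheeger--Simons spark because $d(d^c\eta) = dd^c\eta = c_k(F,h) - \mathbb{D}(\alpha)$; its characteristic form $c_k(F,h)$ and integral residue $\mathbb{D}(\alpha)$ already agree with those of the refined Chern class $\widehat{c}_k(F,h) = [(\sigma, c_k(F,h), \mathbb{D}(\alpha))]$, where $\sigma$ is the canonical transgression with $d\sigma = c_k(F,h) - \mathbb{D}(\alpha)$. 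To identify the two spark classes I would compare $d^c\eta$ with $\sigma$: both have the same $d$-boundary, so $d^c\eta - \sigma$ is $d$-closed, and I would invoke the Proposition proved via \cite[Theorem 7.6]{HZ01}, which expresses this difference in the form $dL + R$ with $R$ rectifiable. Taking $b = L$ and $s = R$ in the definition of spark equivalence then yields $[(d^c\eta, c_k(F,h), \mathbb{D}(\alpha))] = \widehat{c}_k(F,h)$, as required.

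The main obstacle is precisely this last identification, and it is really a statement about the compatibility of two canonical potentials: the $d$-transgression $\sigma$ of Harvey--Lawson and the $dd^c$-potential $\eta$ of the $dd^c$-refinement. The care needed is to verify that the correction $R$ produced by \cite{HZ01} is an admissible datum for a Cheeger--Simons spark equivalence, namely that $R$ is integral rectifiable and that $dR = 0$ so that the integral residue $r = \mathbb{D}(\alpha)$ is preserved; the latter is automatic, since $d(d^c\eta - \sigma) = (c_k(F,h) - \mathbb{D}(\alpha)) - (c_k(F,h) - \mathbb{D}(\alpha)) = 0$ forces $dR = 0$. Once the types and degrees are matched, every remaining step is routine unwinding of the definitions.
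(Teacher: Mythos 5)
Your overall skeleton is reasonable, and for context you should know the paper itself states this theorem with no proof at all, so the only yardstick is the machinery it assembles beforehand. Your treatment of the two easy corners is correct: the vertical map sends $\widehat{bc}_{k}(F,h)=[(\eta, c_{k}(F,h), \mathbb{D}(\alpha))]$ to $[c_{k}(F,h)]_{BC}=bc_{k}(F)$, and the bottom map sends that to the Chern--Weil class $c_{k}(F)$. You also correctly isolate the crux, namely showing that $[(d^c\eta, c_{k}(F,h), \mathbb{D}(\alpha))]$ equals $\widehat{c}_{k}(F,h)=[(\sigma, c_{k}(F,h), \mathbb{D}(\alpha))]$ as spark classes, and you rightly sense that agreement of the smooth form and of the residue is not by itself sufficient: two sparks with the same $e$ and $r$ differ by a flat spark $(d^c\eta-\sigma,0,0)$, whose class lives in a quotient of $H^{2k-1}_{DR}(X;\C)$ by integral classes, and this group is in general nonzero.

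The gap is in how you close that last step. The Proposition you invoke (the one the paper proves by citing \cite[Theorem 7.6]{HZ01}) compares $d^c\eta$ with $d^c\eta'$, i.e.\ the $dd^c$-potentials attached to two different choices of metric and atomic family, \emph{both} produced by the $dd^c$-refinement theorem. It says nothing about the mixed difference $d^c\eta-\sigma$, where $\sigma$ is the Harvey--Lawson $d$-transgression of \cite[VI. Thm 1.5]{HL93} entering the definition of $\widehat{c}_{k}(F,h)$; setting $(h',\alpha')=(h,\alpha)$ in that Proposition only yields the vacuous identity $0=dL+R$. So the decomposition $d^c\eta-\sigma=dL+R$ with $R$ rectifiable, on which your whole identification rests, is not supplied by anything you (or the paper) cite, and without it the top horizontal arrow could a priori land on a spark class differing from $\widehat{c}_{k}(F,h)$ by a nontrivial flat class. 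What actually closes the argument is the compatibility built into the construction of the $dd^c$-refinement in \cite[Remark 4.18]{HL95}: there $\eta$ is produced from the canonical transgression itself, so that $d^c\eta$ and $\sigma$ agree up to terms of the form $db+s$ with $s$ rectifiable (in the rank-one case both reduce, via Poincar\'e--Lelong, to $-d^c\log|s|^2_h$ up to normalization); alternatively one can cite the D-bar spark representation of the Cheeger--Simons class in \cite{HL08}. With that reference substituted for the misapplied Proposition, the rest of your diagram chase (including the observation that $dR=0$ is automatic) goes through.
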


From this, we have successfully defined a Bott-Chern differential cohomology and refined Bott-Chern classes for holomorphic vector bundles
over compact complex manifolds. This Bott-Chern differential cohomology can be used to distinguish compact complex manifolds
which can not be distinguished by Bott-Chern cohomology. Such examples can be found from a small deformation of the Iwasawa manifold
as in \cite{Teh16}.

\bibliographystyle{amsplain}

\begin{thebibliography}{1}
\bibitem{A} H. Alexander, Holomorphic chains and the support hypothesis conjecture, JAMS, Vol. 10, No. 1, 1997, 123-138.

\bibitem{BC} R. Bott and S.S. Chern, Hermitian vector bundles and the equidistribution of the zeroes of their
holomorphic sections, Acta Math., no. 1, Vol 114 (1965), 71-112.

\bibitem{CS} J. Cheeger and J. Simons, Differential character and geometric invariants, Geometry and Topology, LMN1167, Springer-Verlag, New York, 1985, 50-80.


\bibitem{FF60} H. Federer, W. H. Fleming, Normal and integral currents, Ann. Math., 72, No. 3(1960), 458-520.

\bibitem{F} D.S. Freed, Dirac charge quantization and generalized differential cohomology, Surv. Diff. Geom. VII (2000) 129-194[arXiv:hep-th/0011220].

\bibitem{GMS} M. Giaquinta, G. Modica and J. Sou$\check{c}$ek, Cartesian currents in the calculus of variations I, Ergebnisse Math.
Grenzgebiete (III Ser), 37, Springer, Berlin 1998.

\bibitem{GS} H. Gillet and C. Soule, Arithmetic intersection theory, Publ. Math. I.H.E.S. 72(1990), 94-174.

\bibitem{H} N. Hao, PhD Thesis, D-Bar spark sheory and Deligne cohomology, Stony Brook University, 2007.

\bibitem{HL93} R. Harvey and B. Lawson, A theory of characteristic currents associated with a singular connection, Asterisque(1993).

\bibitem{HL95} R. Harvey and B. Lawson, Geometric residue theorems, AJM, Vol. 117, No. 4(1995), 829-873.

\bibitem{HL06} R. Harvey and B. Lawson, From sparks to grundles--differential characters,  Comm. Anal. Geom., 1, 14(2006), 25-58.

\bibitem{HL08} R. Harvey and B. Lawson, D-bar sparks, Proc. LMS, 3, 97(2008), 1-30.

\bibitem{HLZ} R. Harvey, B. Lawson and J. Zweck, The de Rham-Federer theory of differential characters and character duality,
Amer. J. Math., 125(2003), 791-847.

\bibitem{HS92} R. Harvey and S.W. Semmes, Zero divisors of atomic functions, Ann. Math., Vol. 135, 1992, 567-600.

\bibitem{HS74} R. Harvey and B. Shiffman, A characterization of holomorphic chains, Ann. Math., Vol. 99, No. 3, 1974, 553-587.

\bibitem{HZ01} R. Harvey and J. Zweck, Divisors and Euler sparks of atomic sections, Indiana Univ. Math. J., Vol. 50, No. 1 (2001), 243-298.

\bibitem{HS} M. J. Hopkins, I. M. Singer, Quadratic functions in geometry, topology, and M-theory, JDG, 70(2005), no. 3, 329-452.

\bibitem{KP08} S. G. Krantz and H. R. Parks, Geometric integration theory, Birkhauser, 2008.

\bibitem{S83} L. Simon, Lectures on Geometric Measure Theory, Australian National University Centre
               for Mathematical Analysis, Canberra, 1983.

\bibitem{Teh16} J.H. Teh, Aeppli-Bott-Chern cohomology and Deligne cohomology from a viewpoint of Harvey-Lawson’s spark complex,
AGAG, no.2, 50(2016), 165-186.

\bibitem{TY1} J.H. Teh, C.J. Yang, Real rectifiable currents and algebraic cycles, arXiv:1810.00355.

\bibitem{TY2} J.H. Teh, C.J. Yang, A characterization of real holomorphic chains and applications in the study of algebraic cycles, arXiv:1901.04152
\end{thebibliography}

\begin{itemize}
\item[] Jyh-Haur Teh, Department of Mathematics, Tsing Hua University(Hsinchu), 101 KuangFu Road, Section 2, 30013 Hsinchu, Taiwan.\\
Email address: jyhhaur@math.nthu.edu.tw

\item[] Chin-Jui Yang, Department of Mathematics, Tsing Hua University(Hsinchu), 101 KuangFu Road, Section 2, 30013 Hsinchu, Taiwan.\\
Email address: p9433256@gmail.com
\end{itemize}

\end{document}